\documentclass[11pt]{article}

\usepackage[utf8]{inputenc}
\usepackage{xcolor}

\definecolor{yellow1}{rgb}{1,0.8,0.2}

\usepackage[numbers,sort&compress]{natbib}
\bibpunct[, ]{[}{]}{,}{n}{,}{,}

\usepackage{
	amsmath,
	amsthm,
	amsfonts,
	amssymb,
	amsbsy,
	amsopn,
	amstext
}
\usepackage{mathtools}
\usepackage{mathrsfs}

\usepackage{algorithm}
\usepackage{algorithmic}

\usepackage{graphicx}
\usepackage{epstopdf}
\usepackage{float}
\usepackage[caption=false]{subfig}

\usepackage{tikz}
\usetikzlibrary{
	arrows,
	decorations.pathmorphing,
	backgrounds,
	positioning,
	fit,
	petri,
	automata
}
\usepackage{booktabs}
\usepackage{threeparttable}
\usepackage{multirow}
\usepackage{diagbox}
\usepackage{makecell}
\renewcommand{\arraystretch}{1.5}
\usepackage{enumitem}
\usepackage{appendix}
\usepackage{indentfirst}
\usepackage{emptypage}
\usepackage{comment}
\usepackage{geometry}
\usepackage{hyperref}
\hypersetup{hidelinks}
\theoremstyle{plain}
\newtheorem{theorem}{Theorem}[section]
\newtheorem{lem}[theorem]{Lemma}

\theoremstyle{definition}

\theoremstyle{remark}

\newtheorem{ass}{Assumption}

\newif\ifrevision
\revisiontrue

\ifrevision

\else

\fi

\newcommand{\normm}[1]{%
	\left\vert\kern-0.25ex
	\left\vert\kern-0.25ex
	\left\vert #1
	\right\vert\kern-0.25ex
	\right\vert\kern-0.25ex
	\right\vert
}

\begin{document}

\title{An Efficient Stochastic First-Order Algorithm for Nonconvex--Strongly Concave Minimax Optimization beyond Lipschitz Smoothness}

\author{Yan~Gao and Yongchao~Liu\thanks{School of Mathematical Sciences, Dalian University of Technology, Dalian 116024, China, e-mail: gydllg123@mail.dlut.edu.cn (Yan~Gao), lyc@dlut.edu.cn (Yongchao~Liu) }}
\date{}
\maketitle
\noindent{\bf Abstract.} 
In recent years, nonconvex minimax problems have attracted significant attention because of their broad applications in machine learning, including generative adversarial networks, robust optimization and adversarial training. Most existing algorithms for nonconvex stochastic minimax problems are developed under the standard Lipschitz smoothness assumption. In this paper, we study stochastic minimax problems under a generalized smoothness condition and propose an algorithm, NSGDA-M, which simultaneously updates the inner variable by stochastic gradient ascent and updates the outer variable by normalized stochastic gradient descent with momentum. 
When the objective function is nonconvex--strongly concave, we show that NSGDA-M finds an \(\epsilon\)-stationary point of the primal function within \(\mathcal O(\epsilon^{-4}\log(1/(\epsilon\delta)))\)
stochastic gradient evaluations with probability at least \(1-\delta\).
Moreover, we
establish an expected stationarity guarantee of
\(\mathcal O(\delta^{-3/4}T^{-1/4})+G_\Phi\delta\), which gives a convergence rate \(\mathcal O(T^{-1/7})\). Here \(G_\Phi\) bounds the primal gradient norms along the iterates.
Numerical experiments on a distributionally robust optimization problem demonstrate the effectiveness of the proposed algorithm. 

\noindent\textbf{Key words.} Nonconvex--strongly concave minimax optimization; generalized smoothness; normalized stochastic gradient methods; momentum acceleration.
	
	\section{Introduction}
	
	Consider the following nonconvex--strongly concave stochastic minimax problem
	\begin{equation}\label{RS_minimax_problem}
		\min_{x \in \mathbb{R}^n} \max_{y \in \mathcal{Y}} \mathcal{L}(x, y) := \mathbb{E}_{\xi \sim \mathcal{P}} \left[ l(x, y; \xi) \right],
	\end{equation}
	where $\mathcal{Y}\subseteq\mathbb{R}^{m}$ is a closed convex set, $\xi$ is drawn from a possibly unknown probability distribution $\mathcal{P}$, and for each $\xi$, $l(\cdot,\cdot;\xi)$ is nonconvex in $x$ and $\mu$-strongly concave in $y$. 
	Problem \eqref{RS_minimax_problem} has attracted significant attention because of its broad applications in modern machine learning, including generative adversarial networks \cite{Creswell2018Generative, Goodfellow2014Advance}, distributionally robust optimization \cite{Levy2020Large,namkoong2016stochastic} and adversarial training \cite{sinha2018certifiable,tu2019theoretical}.
	
	Over the past decade, numerous algorithms for nonconvex--strongly concave stochastic minimax problems have been developed under the classical Lipschitz smoothness assumption.
	A natural method is gradient descent with a max-oracle (GDmax)~\cite{jin2020local},
	which is a double-loop algorithm that alternates between a gradient descent step in $x$ and an (approximate) maximization step in $y$. Lin et al.~\cite{lin2020gradient} extend the GDmax to stochastic setting (SGDmax) and show that SGDmax finds an $\epsilon$-stationary point within $\tilde{\mathcal{O}}\left(\epsilon^{-4}\right)$ stochastic gradient evaluations.\footnote{The notation
		\(\widetilde{\mathcal O}(\cdot)\) suppresses logarithmic factors.} 
	Several subsequent studies improve the convergence rate by using inexact proximal-point frameworks~\cite{rafique2022weakly,zhang2022sapd+}, variance-reduction techniques~\cite{luo2020stochastic,xu2021enhanced}, and adaptive-stepsize setting~\cite{yang2022nest}.
	On the other hand,
	Lin et al.~\cite{lin2020gradient} propose a single loop algorithm, stochastic gradient descent ascent algorithm (SGDA) that alternates between a stochastic gradient descent step in \(x\) and a stochastic gradient ascent step in \(y\), in which the stepsize $\eta_y$ is larger than $\eta_x$ to force $y$ to move faster than $x$, 
	achieving an $\mathcal{O}(\epsilon^{-4})$ stochastic gradient complexity. 
	SGDA has a simple structure, and many variants of SGDA have been proposed, including alternating SGDA~\cite{boct2023alternating}, momentum-accelerated SGDA~\cite{huang2022accelerated,huang2023adagda}, smoothed SGDA~\cite{yang2022faster}, adaptive-stepsize SGDA~\cite{xu2024stochastic,li2023tiada}, and distributed SGDA~\cite{chen2024efficient,sharma2022federated,wu2024solving}.
	
	However, the classical Lipschitz smoothness assumption may be violated in modern machine learning applications, including distributionally robust optimization~\cite{jin2021nonconvex} and neural networks \cite{graves2012long, crawshaw2022robustness}, or may hold only with a prohibitively large Lipschitz constant $L$, resulting in overly conservative complexity bounds. 
	Recently, Zhang et al.~\cite{zhang2020why} propose the \((L_0, L_1)\)-smoothness condition for minimization problems, which relaxes the global Lipschitz bound by permitting the Hessian norm to scale linearly with the local gradient norm.  Further extensions of this generalized smoothness framework can be found in~\cite{chen2023generalized,li2023convex}.
	To accommodate generalized smoothness, clipping and normalization techniques have been incorporated into stochastic first-order methods~\citep{faw2023beyond,wang2023convergence,
		crawshaw2022robustness,hubler2024parameter,
		li2023convergence,liu2023near,
		reisizadeh2025variance,yang2025independent}.
	For generalized smoothness minimax problems, there are two challenges:
	(i) characterizing
	the descent of a Lyapunov function under generalized smoothness, and (ii) ensuring that the iterates remain in a 
	region where the local smoothness condition applies.
	Recently, Xian et al.~\cite{xian2024delving} study deterministic and
	stochastic variants of GDA and GDmax under the
	\((\ell_x,\ell_y)\)-smoothness condition. 
	They develop a trajectory-wise analysis that controls the primal gradients and the maximizer tracking error
	along the iterates; in the stochastic setting, they use a stopping-time
	localization argument to establish high-probability guarantees.
	They show that
	generalized GDA and GDmax achieve an
	\(\mathcal O(\epsilon^{-2})\) iteration complexity, while their
	stochastic variants achieve an
	\(\mathcal O(\delta^{-4}\epsilon^{-4})\) stochastic gradient complexity with probability at least \(1-\delta\).
	\footnote{The dependence on \(\delta\) is not explicitly displayed in the complexity notation of \cite{xian2024delving}; it is made explicit here for comparison.}
	For deterministic generalized smooth minimax problems, 
	Zhu et al.~\cite{zhu2025finding} propose an
	accelerated negative curvature gradient descent ascent algorithm
	(ANCGDA) with only gradient evaluations involved and establish an
	\(\mathcal{O}(\epsilon^{-1.75}\log n)\) complexity for finding
	second-order stationary points under a generalized Hessian smoothness
	condition.

	We contribute to minimax optimization under generalized smoothness by
	proposing NSGDA-M, which simultaneously updates the inner variable by a
	stochastic gradient ascent step and the outer variable by a normalized
	stochastic gradient descent step with momentum. 
	NSGDA-M exploits the normalized momentum update to directly control
	primal movement, reducing the reliance on trajectory-wise gradient
	magnitude control required by existing generalized smooth minimax
	analyses~\cite{xian2024delving,zhu2025finding}.
	Building on the stopping-time localization technique in
	\cite{xian2024delving}, we develop a stopped-process analysis framework
	and establish stationarity guarantees both with high probability and in
	expectation.
	When the objective function is nonconvex--strongly concave and satisfies the
	\((L_0,L_1)\)-generalized smoothness condition, NSGDA-M finds an
	\(\epsilon\)-stationary point of the primal function within
	\(\mathcal O(\epsilon^{-4}\log(1/(\epsilon\delta)))\) stochastic gradient
	evaluations with probability at least \(1-\delta\). 
	Compared with the high-probability complexity bound established in
	Xian et al.~\cite{xian2024delving}, NSGDA-M improves the dependence on
	the failure probability \(\delta\) from polynomial to logarithmic with
	constant batch size. Moreover, we establish an expected stationarity guarantee of \(\mathcal O(\delta^{-3/4}T^{-1/4})+G_\Phi\delta\), which gives a convergence rate \(\mathcal O(T^{-1/7})\).
	Here \(G_\Phi\) bounds the primal gradient norms along the iterates.
	The high-probability and expected stationarity analyses treat the localization failure event differently: the former controls the probability of leaving the localized region, whereas the latter also accounts for the effect of the failure event through an additional bounded-gradient condition. The framework extends to 
	nonconvex--strongly concave minimax problems with compact convex inner constraints under a slightly stronger assumption.

	The structure of this paper is organized as follows.
	Section \ref{RS_Section_2} introduces the NSGDA-M algorithm and presents necessary assumptions on problem \eqref{RS_minimax_problem}. Section~\ref{RS_Section_3} presents the convergence analysis of NSGDA-M, where Subsection~\ref{RS_Subsection_3_1} establishes the high-probability stationarity guarantee, and Subsection~\ref{RS_Subsection_3_2} establishes the expected stationarity guarantees. Section \ref{RS_Section_4} verifies the effectiveness of the proposed algorithm through numerical experiments on a distributionally robust optimization problem.

	\noindent\textbf{Notation.} 
	$\mathbb{R}^n$ denotes the $n$-dimensional Euclidean space equipped with the norm $\|x\| = \sqrt{\langle x, x \rangle}$. $\mathbb{N}$ denotes the set of nonnegative integers. 
	For \(a,b\in\mathbb R\), let \(a\wedge b:=\min\{a,b\}\).
	For a differentiable function $f(\cdot)$, $\nabla f$ denotes its full gradient, $\nabla_x f(\cdot)$ and $\nabla_y f(\cdot)$ denote the partial gradients with respect to $x$ and $y$, respectively.  $\operatorname{proj}_{\mathcal{Y}}(y)$ denotes the Euclidean projection of $y$ onto the set $\mathcal{Y} \subseteq \mathbb{R}^m$.  
	We denote $a = \mathcal{O}(b)$ if there exists a constant $C > 0$ such that $|a| \le C |b|$ and $[M] := \{1, 2, \dots, M\}$. 
	Throughout this paper, for any vector \(v\), the normalized direction
	\(v/\|v\|\) is understood as \(\mathbf 0\) when \(v=\mathbf 0\).

	\section{Preliminaries and Algorithm}\label{RS_Section_2}
	In this section, we present the NSGDA-M algorithm and some necessary assumptions. 
	
	\begin{algorithm}[htbp]
		\caption{Normalized Stochastic Gradient Descent Ascent with Momentum (NSGDA-M)}
		\label{algorithm:NSGDA-M}
		\begin{algorithmic}[1]
			\STATE \textbf{Input:} $T$, stepsizes $\eta_x, \eta_y$, momentum parameter $\beta\in (0,1)$.
			\STATE \textbf{Initialization:}
			\(x^{0}\in\mathbb{R}^{n}\), \(y^{0}\in\mathcal Y\), and \(m^0=\mathbf{0}\).
			\FOR{$t=0$ \TO $T-1$}
			\STATE Draw a stochastic sample $\xi^t \sim \mathcal{P}$.
			\STATE Compute stochastic gradients $G_{x}\left(x^t, y^t, \xi^t\right),\, G_{y}\left(x^t, y^t, \xi^t\right).$
			\STATE Update the primal variable $x$:
			{\small
				\begin{align}
					&m^{t+1}=\beta m^{t}+(1-\beta)G_x(x^t,y^t,\xi^t), \label{RS_equ:alg_m}\\
					&x^{t+1}=x^{t}-\eta_{x}\frac{m^{t+1}}{\|m^{t+1}\|}. \label{RS_equ:alg_x}
			\end{align}}
			\STATE Update the dual variable $y$:
			{\small
				\begin{align}
					&y^{t+1}=\operatorname{proj}_{\mathcal{Y}}\left(y^{t}+\eta_{y}G_y(x^t,y^t,\xi^t)\right). \label{RS_equ:alg_y}
			\end{align}}
			\ENDFOR
			\STATE \textbf{Output:} \(\widehat{x}\) chosen uniformly at random from
			\(\{x^t\}_{t=0}^{T-1}\).
		\end{algorithmic}
	\end{algorithm}
	
	In Algorithm~\ref{algorithm:NSGDA-M}, Step 5 computes the stochastic gradient estimates of the objective function $\mathcal{L}(\cdot)$. Step 6 updates the primal variable $x$ by first performing a momentum step in~\eqref{RS_equ:alg_m}, and then applying a normalized stochastic gradient descent update in~\eqref{RS_equ:alg_x}. The
	normalized update controls the magnitude of the primal update
	independently of the gradient magnitude, while the momentum term smooths
	the stochastic gradient direction by averaging historical gradient
	information.
	Normalized stochastic gradient methods with momentum are widely used in nonconvex stochastic optimization, and it has been shown that the incorporation of the additional momentum mechanism can remove the need for large batch sizes for nonconvex objectives \cite{Cutkosky2020MomentumNSGD, Sun2023MomentumSIGNSGD}. 
	Step 7 updates the dual variable \(y\) by a projected stochastic gradient
	ascent step. This update applies to both dual-domain settings
	considered in this paper: the unconstrained setting
	\(\mathcal{Y}=\mathbb{R}^m\) and the constrained setting in which
	\(\mathcal{Y}\) is a compact convex set.
	Unlike the generalized SGDA and SGDmax methods in~\cite{xian2024delving} which require batch sizes of order $\Theta(\epsilon^{-2})$ to ensure convergence, NSGDA-M converges with a constant batch size.

	We next introduce the notation and assumptions used in our convergence
	analysis:
	\begin{gather*}
		\Phi(\cdot)\coloneqq
		\max_{y\in\mathcal Y}\mathcal L(\cdot,y),
		\qquad
		y^*(\cdot)\in
		\operatorname*{argmax}_{y\in\mathcal Y}
		\mathcal L(\cdot,y),
		\qquad
		\Phi^*\coloneqq
		\inf_{x\in\mathbb R^n}\Phi(x).
	\end{gather*}
	The filtration $\{\mathcal{F}_t\}_{t\geq 0}$ is defined by  
	$\mathcal{F}_t = \sigma(\xi^0, \xi^1, \dots, \xi^{t-1})$ for $t \geq 1$ 
	and $\mathcal{F}_0 = \{\emptyset, \Omega\}$.
	
	\begin{ass}\label{RS_ass_PhiLowerBound}
		The primal function $\Phi(\cdot)$ is lower bounded, i.e., $\inf_x \Phi(x) = \Phi^* > -\infty$.
	\end{ass}
	\begin{ass}\label{RS_ass_stronglyConcave_Y}
		The objective function and feasible set satisfy
		
		(a) For any $x\in \mathbb{R}^n$, $\mathcal{L}(x,\cdot)$ is $\mu$-strongly concave with $\mu>0$.
		
		(b) The feasible set $\mathcal{Y}$ is either $\mathbb{R}^m$ or a nonempty compact convex subset of $\mathbb{R}^m$.
	\end{ass}
	\begin{ass}[Generalized Smoothness]\label{RS_ass_L0L1Smooth}
		For the function
		\(\mathcal{L}(\cdot):\mathbb{R}^n\times\mathbb{R}^m
		\rightarrow \mathbb{R}\),
		there exist constants
		\(L_{x,0},L_{x,1},L_{y,0},L_{y,1}> 0\)
		such that, for any two points
		\(\mathbf{u}=(x,y)\) and
		\(\mathbf{u}'=(x',y')\) satisfying
		\(
		\|\mathbf{u}-\mathbf{u}'\|
		\leq
		\frac{1}{L_{x,1}}
		+
		\frac{1}{L_{y,1}},
		\)
		the following inequalities hold
		\begin{align*}
			\bigl\|
			\nabla_x \mathcal{L}(\mathbf{u})
			-
			\nabla_x \mathcal{L}(\mathbf{u}')
			\bigr\|
			&\leq
			\bigl(
			L_{x,0}
			+
			L_{x,1}
			\|\nabla_x \mathcal{L}(\mathbf{u})\|
			\bigr)
			\|\mathbf{u}-\mathbf{u}'\|,
			\\
			\bigl\|
			\nabla_y \mathcal{L}(\mathbf{u})
			-
			\nabla_y \mathcal{L}(\mathbf{u}')
			\bigr\|
			&\leq
			\bigl(
			L_{y,0}
			+
			L_{y,1}
			\|\nabla_y \mathcal{L}(\mathbf{u})\|
			\bigr)
			\|\mathbf{u}-\mathbf{u}'\|.
		\end{align*}
	\end{ass}
	
	\begin{ass}\label{RS_ass_gradyStarBound}
		There exists a constant $B \geq 0$ such that for all $x \in \mathbb{R}^n$,
		\[
		\|\nabla_{y}\mathcal{L}(x, y^{*}(x))\| \leq B.
		\]
	\end{ass}
	
	\begin{ass}
		\label{RR_ass:unbiased}
		For every $t \geq 0$, the stochastic gradients satisfy
		\begin{align*}
			\mathbb{E}[G_x(x^t,y^t,\xi^t) \mid \mathcal{F}_t] &= \nabla_x \mathcal{L}(x^t,y^t),\\
			\mathbb{E}[G_y(x^t,y^t,\xi^t) \mid \mathcal{F}_t] &= \nabla_y \mathcal{L}(x^t,y^t).
		\end{align*}
	\end{ass}
	
	Assumption~\ref{RS_ass_PhiLowerBound} is the standard lower-boundedness condition in nonconvex optimization.
	Assumption~\ref{RS_ass_stronglyConcave_Y}~(a) ensures the uniqueness
	of the maximizer \(y^*(x)\) for each fixed \(x\).
	Assumption~\ref{RS_ass_stronglyConcave_Y}~(b) allows the inner
	maximization problem to be either unconstrained or constrained over
	a nonempty compact convex set.
	Assumption~\ref{RS_ass_L0L1Smooth} is an
	\((L_0,L_1)\)-generalized smoothness condition on \(\mathcal L(\cdot)\), as in
	\cite{zhang2020why,xian2024delving}. It extends the standard
	Lipschitz smoothness condition by allowing the local Lipschitz
	constants to depend on the corresponding partial-gradient norms.	
	Assumption~\ref{RS_ass_gradyStarBound} imposes a uniform bound on
	the \(y\)-partial gradient evaluated at the inner maximizer. It holds with $B=0$ when $\mathcal{Y}=\mathbb{R}^m$ and serves as a regularity condition when \(\mathcal Y\) is a compact convex set. Together with Assumption~\ref{RS_ass_L0L1Smooth}, it yields the uniform local
	\(y\)-smoothness constant \(L_y:=L_{y,0}+L_{y,1}B\) used in the
	\(y\)-tracking analysis. 
	Assumption~\ref{RR_ass:unbiased} states the conditional unbiasedness
	of the stochastic gradients, a standard assumption in stochastic
	optimization.
	Throughout
	the convergence analysis, we set
	\begin{equation}\label{RS_equ:conditionNum_def}
		L_y:=L_{y,0}+L_{y,1}B,
		\qquad
		\kappa:=\frac{L_y}{\mu}.
	\end{equation}

	\section{Convergence Analysis}\label{RS_Section_3}
	In this section, we establish convergence guarantees for NSGDA-M. We
	first present several auxiliary lemmas used in the subsequent analysis.
	We then establish the high-probability and expected stationarity
	guarantees in Subsections~\ref{RS_Subsection_3_1} and
	\ref{RS_Subsection_3_2}, respectively.
	
	\begin{lem}\label{RS_lem_LipSolution}
		Suppose Assumptions \ref{RS_ass_PhiLowerBound}--\ref{RS_ass_gradyStarBound} hold. For any $x$ and $x'$ that satisfy $\|x' - x\| \leq \frac{1}{L_{x,1}}$, we have
		\begin{equation*}
			\|y^*(x) - y^*(x')\| \leq \kappa \|x - x'\|.
		\end{equation*}
	\end{lem}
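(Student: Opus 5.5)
The plan is the standard strong-monotonicity argument for the maximizer map, combined with the local $y$-smoothness from Assumption~\ref{RS_ass_L0L1Smooth} evaluated at the point $(x',y^*(x'))$. There the $y$-gradient is bounded by $B$ (Assumption~\ref{RS_ass_gradyStarBound}), so the local Lipschitz constant of $\nabla_y\mathcal L$ becomes $L_y=L_{y,0}+L_{y,1}B$. Write $y=y^*(x)$ and $y'=y^*(x')$; both are uniquely defined by Assumption~\ref{RS_ass_stronglyConcave_Y}(a).

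\textbf{Step 1: optimality conditions.} The first-order conditions for the two inner maximizations over the convex set $\mathcal Y$ give
\[
\langle \nabla_y\mathcal L(x,y),\, y'-y\rangle\le 0,
\qquad
\langle \nabla_y\mathcal L(x',y'),\, y-y'\rangle\le 0 .
\]
When $\mathcal Y=\mathbb R^m$ these reduce to equalities with zero gradients.

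\textbf{Step 2: strong concavity.} By $\mu$-strong concavity of $\mathcal L(x,\cdot)$,
\[
\langle \nabla_y\mathcal L(x,y')-\nabla_y\mathcal L(x,y),\, y'-y\rangle\le -\mu\|y-y'\|^2 .
\]
Adding the two optimality inequalities gives
\[
\mu\|y-y'\|^2\le \langle \nabla_y\mathcal L(x',y')-\nabla_y\mathcal L(x,y'),\, y'-y\rangle
\le \|\nabla_y\mathcal L(x',y')-\nabla_y\mathcal L(x,y')\|\,\|y-y'\| .
\]

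\textbf{Step 3: generalized smoothness.} The points $\mathbf u=(x',y')$ and $\mathbf u'=(x,y')$ differ only in the $x$-block, so $\|\mathbf u-\mathbf u'\|=\|x-x'\|\le 1/L_{x,1}$, which is below the threshold $1/L_{x,1}+1/L_{y,1}$. Assumption~\ref{RS_ass_L0L1Smooth}, anchored at $\mathbf u=(x',y')$, therefore gives
\[
\|\nabla_y\mathcal L(x',y')-\nabla_y\mathcal L(x,y')\|\le \bigl(L_{y,0}+L_{y,1}\|\nabla_y\mathcal L(x',y^*(x'))\|\bigr)\|x-x'\|\le L_y\|x-x'\|,
\]
where the last inequality uses Assumption~\ref{RS_ass_gradyStarBound}. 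If $y\ne y'$, dividing Step 2 by $\|y-y'\|$ yields $\|y-y'\|\le (L_y/\mu)\|x-x'\|=\kappa\|x-x'\|$. If $y=y'$ the claim is trivial.

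\textbf{Main obstacle.} The only delicate point is choosing the anchor. The assumption's right-hand side uses the gradient norm at the first argument $\mathbf u$, so the anchor must be $(x',y^*(x'))$, where Assumption~\ref{RS_ass_gradyStarBound} applies. Anchoring at $(x,y^*(x'))$ instead would leave an uncontrolled gradient norm. Because the two points share the same $y$-component, this anchoring also keeps the distance within the admissible radius.
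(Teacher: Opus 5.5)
Your proposal is correct and is essentially the paper's proof: first-order optimality at both maximizers, strong monotonicity from $\mu$-strong concavity, and the generalized $y$-smoothness bound anchored at a point of the form $(\cdot,y^*(\cdot))$ so that Assumption~\ref{RS_ass_gradyStarBound} gives the constant $L_y$. The only difference is cosmetic: you use strong concavity of $\mathcal L(x,\cdot)$ and anchor at $(x',y^*(x'))$, while the paper uses $\mathcal L(x',\cdot)$ and anchors at $(x,y^*(x))$, i.e., the roles of $x$ and $x'$ are swapped.
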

	
	\begin{proof}
		Since $y^{*}(\cdot)=\operatorname{argmax}_{y\in\mathcal{Y}}\mathcal{L}(\cdot, y)$ for any $x\in\mathbb{R}^n$, we have
		\begin{equation*}
			\langle y - y^*(x), \nabla_y \mathcal{L}(x, y^*(x)) \rangle \leq 0, \quad \langle y - y^*(x'), \nabla_y \mathcal{L}(x', y^*(x')) \rangle \leq 0.
		\end{equation*}
		Substituting \(y=y^*(x')\) into the first inequality
		and \(y=y^*(x)\) into the second one, and then summing
		the two resulting inequalities, we obtain
		\begin{equation*}
			\langle y^*(x) - y^*(x'), \nabla_y \mathcal{L}(x', y^*(x')) - \nabla_y \mathcal{L}(x, y^*(x)) \rangle \leq 0.
		\end{equation*}
		Since $\mathcal{L}(\cdot)$ is $\mu$-strongly concave with respect to $y$, we have
		\begin{equation*}
			\mu \|y^*(x) - y^*(x')\|^2 \leq \langle y^*(x) - y^*(x'), \nabla_y \mathcal{L}(x', y^*(x')) - \nabla_y \mathcal{L}(x', y^*(x)) \rangle.
		\end{equation*}
		Combining the above two inequalities, we have
		\begin{equation*}
			\begin{aligned}
				\mu \|y^*(x) - y^*(x')\|^2 \leq{}& \langle y^*(x) - y^*(x'), \nabla_y \mathcal{L}(x, y^*(x)) - \nabla_y \mathcal{L}(x', y^*(x)) \rangle\\
				\leq{}& \|y^*(x) - y^*(x')\| \|\nabla_y \mathcal{L}(x, y^*(x)) - \nabla_y \mathcal{L}(x', y^*(x))\|.
			\end{aligned}
		\end{equation*}
		
		Given $\|x' - x\| \leq \frac{1}{L_{x,1}}$, we have by Assumption \ref{RS_ass_L0L1Smooth} that
		\begin{equation*}
			\begin{aligned}
				\|\nabla_y \mathcal{L}(x, y^*(x)) - \nabla_y \mathcal{L}(x', y^*(x))\| \leq{}& (L_{y,0} + L_{y,1}\|\nabla_y \mathcal{L}(x, y^*(x))\|) \|x - x'\|\\
				\leq{}& (L_{y,0} + L_{y,1}B)\|x - x'\|,
			\end{aligned}
		\end{equation*}
		where the last inequality follows from Assumption~\ref{RS_ass_gradyStarBound}. Note that $B=0$ when $\mathcal{Y} = \mathbb{R}^m$.	
		
		Then, we have
		\begin{equation*}
			\begin{aligned}
				\|y^*(x) - y^*(x')\| \leq{}& \frac{(L_{y,0} + L_{y,1}B)}{\mu} \|x - x'\|.
			\end{aligned}
		\end{equation*}
		The proof is complete.
	\end{proof}
	
	\begin{lem}\label{RS_lem_primalSmoothness}
		Suppose Assumptions \ref{RS_ass_PhiLowerBound}--\ref{RS_ass_gradyStarBound} hold. For any $x$ and $x'$ that satisfy $\|x'-x\| \leq \frac{1}{(1+\kappa)L_{x,1}}$, we have
		\begin{align}
			&\|\nabla \Phi(x) - \nabla \Phi(x')\| \leq (1 + \kappa)(L_{x,0} + L_{x,1}\|\nabla \Phi(x)\|) \|x - x'\|, \tag{a}\\
			&\Phi(x') \leq \Phi(x) + \langle \nabla \Phi(x), x' - x \rangle + \frac{1 + \kappa}{2}(L_{x,0} + L_{x,1}\|\nabla \Phi(x)\|) \|x - x'\|^2, \tag{b}\\
			&\Phi(x') \geq \Phi(x) + \langle \nabla \Phi(x), x' - x \rangle - \frac{1 + \kappa}{2}(L_{x,0} + L_{x,1}\|\nabla \Phi(x)\|) \|x - x'\|^2. \tag{c}
		\end{align}
	\end{lem}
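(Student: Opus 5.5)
We first use Danskin's theorem: under strong concavity in \(y\) (Assumption~\ref{RS_ass_stronglyConcave_Y}) the maximizer \(y^*(x)\) is unique, so \(\Phi\) is differentiable with \(\nabla\Phi(x)=\nabla_x\mathcal L(x,y^*(x))\). In particular \(\|\nabla\Phi(x)\|=\|\nabla_x\mathcal L(x,y^*(x))\|\), which is what lets us replace the partial-gradient norm in Assumption~\ref{RS_ass_L0L1Smooth} by \(\|\nabla\Phi(x)\|\).

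For (a), fix \(x,x'\) with \(\|x-x'\|\le \frac{1}{(1+\kappa)L_{x,1}}\le\frac1{L_{x,1}}\). Lemma~\ref{RS_lem_LipSolution} then gives \(\|y^*(x)-y^*(x')\|\le\kappa\|x-x'\|\). Put \(\mathbf u=(x,y^*(x))\) and \(\mathbf u'=(x',y^*(x'))\). Then
\[
\|\mathbf u-\mathbf u'\|\le\|x-x'\|+\|y^*(x)-y^*(x')\|\le(1+\kappa)\|x-x'\|\le\frac1{L_{x,1}},
\]
so \(\mathbf u,\mathbf u'\) lie in the region where Assumption~\ref{RS_ass_L0L1Smooth} applies. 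The \(x\)-inequality there yields
\[
\|\nabla\Phi(x)-\nabla\Phi(x')\|\le(L_{x,0}+L_{x,1}\|\nabla\Phi(x)\|)(1+\kappa)\|x-x'\|,
\]
which is (a).

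For (b) and (c), write \(x_s=x+s(x'-x)\) for \(s\in[0,1]\). Every \(x_s\) satisfies \(\|x_s-x\|\le s\|x'-x\|\), so (a) applies to the pair \((x,x_s)\). Since \(\nabla\Phi\) is continuous (by (a) locally), the fundamental theorem of calculus gives
\[
\Phi(x')-\Phi(x)-\langle\nabla\Phi(x),x'-x\rangle=\int_0^1\langle\nabla\Phi(x_s)-\nabla\Phi(x),x'-x\rangle\,ds .
\]
We bound the absolute value of the integrand by Cauchy–Schwarz and (a):
\[
|\langle\nabla\Phi(x_s)-\nabla\Phi(x),x'-x\rangle|\le(1+\kappa)(L_{x,0}+L_{x,1}\|\nabla\Phi(x)\|)\,s\,\|x'-x\|^2 .
\]
Integrating \(s\) over \([0,1]\) gives the factor \(1/2\). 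Taking the upper side gives (b) and the lower side gives (c).

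The main point to check is the localization step in (a). We need the joint displacement \(\|\mathbf u-\mathbf u'\|\) to stay within the radius \(\frac1{L_{x,1}}+\frac1{L_{y,1}}\) of Assumption~\ref{RS_ass_L0L1Smooth}, and the chain above shows this holds with room to spare. We also need the Lipschitz constant to be anchored at the base point \(x\), not at \(x_s\). This is why (a) is stated with \(\|\nabla\Phi(x)\|\), and why the integral argument compares each \(x_s\) back to \(x\). The remaining justification is Danskin's theorem, which is standard given uniqueness of \(y^*(x)\) and continuity of \(\nabla_x\mathcal L\). When \(\mathcal Y\) is compact it is immediate. When \(\mathcal Y=\mathbb R^m\) it holds locally, because \(y^*\) is continuous by Lemma~\ref{RS_lem_LipSolution}.
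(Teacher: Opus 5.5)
Your proposal is correct and follows the paper's proof step for step: (a) applies Assumption~\ref{RS_ass_L0L1Smooth} at $(x,y^*(x))$ and $(x',y^*(x'))$ together with Lemma~\ref{RS_lem_LipSolution}, and (b)--(c) use the Newton--Leibniz formula along $x+s(x'-x)$, with the integrand bounded by (a) anchored at $x$. Your explicit checks of the localization radius and of the Danskin identity $\nabla\Phi(x)=\nabla_x\mathcal L(x,y^*(x))$ are details the paper leaves implicit, and they are sound.
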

	\begin{proof}
		By Assumption \ref{RS_ass_L0L1Smooth} and Lemma \ref{RS_lem_LipSolution}, we have
		\begin{equation*}
			\begin{aligned}
				\|\nabla \Phi(x') - \nabla \Phi(x)\| = &\|\nabla_x \mathcal{L}(x', y^*(x')) - \nabla_x \mathcal{L}(x, y^*(x))\|\\
				\leq &(L_{x,0} + L_{x,1}\|\nabla \Phi(x)\|) (\|x' - x\| + \|y^*(x') - y^*(x)\|)\\
				\leq &(1 + \kappa)(L_{x,0} + L_{x,1}\|\nabla \Phi(x)\|) \|x' - x\|,
			\end{aligned}
		\end{equation*}
		which verifies (a).
		
		Let $z(t) = x + t(x' - x), t\in [0,1]$.
		By the Newton–Leibniz formula, we have
		\begin{equation*}
			\Phi(x') = \Phi(x) + \langle \nabla \Phi(x), x' - x \rangle + \int_0^1 \langle \nabla \Phi(z(t)) - \nabla \Phi(x), x' - x \rangle dt.
		\end{equation*}
		Then, we have by the Cauchy–Schwarz inequality that
		\begin{equation*}
			\begin{aligned}
				\left|\Phi(x') - \Phi(x) - \langle \nabla \Phi(x), x' - x \rangle\right| \leq{} &(1+\kappa) (L_{x,0} + L_{x,1}\|\nabla \Phi(x)\|) \|x' - x\|^2 \int_0^1 t dt\\
				\leq{}&\frac{1+\kappa}{2} (L_{x,0} + L_{x,1}\|\nabla \Phi(x)\|) \|x - x'\|^2,
			\end{aligned}
		\end{equation*}
		which implies (b)-(c). The proof is complete.
	\end{proof}

	\subsection{Convergence analysis of NSGDA-M in high probability}\label{RS_Subsection_3_1}
	This subsection establishes a high-probability stationarity guarantee for Algorithm~\ref{algorithm:NSGDA-M}. The analysis relies on a stopping-time localization argument. We first state the stochastic
	gradient noise assumption and the auxiliary lemmas used below.
	
	\begin{ass}
		\label{RS_ass_gradienError_proba}
		There exist constants \(\sigma_x,\sigma_y>0\) such that,
		for every \(t\geq0\),
		the stochastic gradient noise satisfies the 
		following conditions:
		\begin{enumerate}[label=(\alph*)]
			\item $\|G_x(x^t, y^t, \xi^t) - \nabla_x \mathcal{L}(x^t, y^t)\| \leq \sigma_x$, almost surely;
			
			\item $\mathbb{P}\!\left(\|G_y(x^t, y^t, \xi^t) - \nabla_y \mathcal{L}(x^t, y^t)\| \geq \lambda \mid \mathcal{F}_t\right) 
			\leq 2 \exp\!\left(-\frac{\lambda^{2}}{2\sigma_y^{2}}\right)$, for all $\lambda \geq 0$.
		\end{enumerate}
	\end{ass}
	
	Assumption~\ref{RS_ass_gradienError_proba}(a) provides the
	almost-sure bound on the primal stochastic gradient noise required
	by the martingale concentration argument in
	Lemma~\ref{RS_lem_martingale}, whereas Assumption~
	\ref{RS_ass_gradienError_proba}(b) imposes a conditional
	norm-sub-Gaussian tail bound on the dual stochastic gradient noise
	used to establish the conditional MGF bounds required by
	Lemma~\ref{RS_lem_RecursiveControlMGF}.

	\begin{lem}\label{RS_lem_martingale}\cite[Lemma 2.4]{liu2023near}
		Suppose $X_1, \ldots, X_T$ is a martingale difference sequence adapted to a filtration $\mathcal{F}_1, \mathcal{F}_2, \ldots ,\mathcal{F}_T$ in a Hilbert space such that $\|X_t\| \leq R_t, \forall t \in [T]$ almost surely for some constants $R_t \geq 0$. Then for any given $\delta \in (0,1)$, with probability at least $1 - \delta$, for all $t \in [T]$ we have
		\begin{equation*}
			\left\|\sum_{s=1}^{t} X_s\right\| \leq 4 \sqrt{\log(2/\delta) \sum_{s=1}^{T} R_s^2}.
		\end{equation*}
	\end{lem}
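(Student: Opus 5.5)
The statement is a vector-valued (Hilbert-space) Azuma--Hoeffding inequality with a uniform-in-time guarantee. I plan to prove it with Pinelis' supermartingale construction for Hilbert-space martingales, combined with Ville's maximal inequality. This route gives the maximum over $t\in[T]$ at no extra cost beyond the bound for $t=T$.

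Write $S_t=\sum_{s=1}^t X_s$ with $S_0=0$, and set $V:=\sum_{s=1}^T R_s^2$. For a parameter $\lambda>0$, consider the process
\[
M_t=\cosh(\lambda\|S_t\|)\,\prod_{s=1}^{t}\exp\!\Bigl(-\tfrac{\lambda^2R_s^2}{2}\Bigr)\cdot c_s ,
\]
with suitable constants $c_s$. The key step is Pinelis' lemma: in a Hilbert space (which is $(2,1)$-smooth), for a martingale difference $X$ with $\|X\|\le R$,
\[
\mathbb E[\cosh(\lambda\|S+X\|)\mid\mathcal F]\le \cosh(\lambda\|S\|)\,\exp(\lambda^2R^2/2)\cdot(1+o(1)).
\]
The cleanest form is
\[
\mathbb E[\cosh(\lambda\|S+X\|)\mid\mathcal F]\le \cosh(\lambda\|S\|)\,\mathbb E[e^{\lambda\|X\|}-\lambda\|X\|\mid\mathcal F].
\]
I would verify this by defining $u(\tau)=\cosh(\lambda\|S+\tau X\|)$ and bounding $u''(\tau)\le \lambda^2\|X\|^2\cosh(\lambda\|S+\tau X\|)$. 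The Hilbert structure enters here: the Hessian of $\|\cdot\|$ is dominated by $1/\|\cdot\|$ times the projection. One then integrates using $\mathbb E[u'(0)\mid\mathcal F]=0$, which holds because $\mathbb E[X\mid\mathcal F]=0$. Since $e^{a}-a\le e^{a^2}$, or more sharply $\le 1+a^2\le e^{a^2}$ for small $a$, one gets a per-step factor of at most $\exp(\lambda^2R_s^2)$. This makes $M_t:=\cosh(\lambda\|S_t\|)\exp(-\lambda^2\sum_{s\le t}R_s^2)$ a nonnegative supermartingale with $M_0=1$.

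Next I apply Ville's inequality (Doob's maximal inequality for nonnegative supermartingales):
\[
\mathbb P\Bigl(\max_{t\le T}\cosh(\lambda\|S_t\|)\ge e^{\lambda^2V}\tfrac{2}{\delta}\Bigr)\le \tfrac{\delta}{2}.
\]
Since $\tfrac12e^{z}\le\cosh z$, on the complement we have, for all $t\in[T]$,
\[
\lambda\|S_t\|\le \lambda^2V+\log(4/\delta).
\]
Choosing $\lambda=\sqrt{\log(4/\delta)/V}$ gives $\|S_t\|\le 2\sqrt{V\log(4/\delta)}$. Because $\log(4/\delta)\le 2\log(2/\delta)$ for $\delta<1$, this yields $\|S_t\|\le 2\sqrt2\sqrt{V\log(2/\delta)}\le 4\sqrt{V\log(2/\delta)}$, with probability at least $1-\delta/2\ge1-\delta$. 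The case $V=0$ is trivial, since then every $X_s=0$ almost surely.

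The main obstacle is the one-step Pinelis bound. The difficulty is that $\|\cdot\|$ is not differentiable at $0$. I would handle it by the standard smoothing $\sqrt{\|\cdot\|^2+\varepsilon^2}$, or by noting that $\cosh(\lambda\|v\|)=\sum_k \lambda^{2k}\|v\|^{2k}/(2k)!$ is smooth in $v$. The second derivative bound $\frac{d^2}{d\tau^2}\cosh(\lambda\|S+\tau X\|)\le\lambda^2\|X\|^2\cosh(\lambda\|S+\tau X\|)$ then follows from the facts that $|\frac{d}{d\tau}\|S+\tau X\||\le\|X\|$, that $\frac{d^2}{d\tau^2}\|S+\tau X\|\le\|X\|^2/\|S+\tau X\|$, and that $\sinh(z)/z\le\cosh z$. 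A Grönwall-type integration in $\tau$ then produces the required factor. Everything else is routine, and the constant $4$ leaves ample slack.
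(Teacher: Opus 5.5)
The paper gives no proof of this lemma. It imports it verbatim from Liu et al.\ (their Lemma~2.4) and uses it only as a black box to control $I_1$ in the high-probability theorem. Your proposal is therefore a genuinely different, self-contained route, and it is correct. It combines three ingredients: Pinelis's one-step inequality $\mathbb E[\cosh(\lambda\|S+X\|)\mid\mathcal F]\le\cosh(\lambda\|S\|)\,\mathbb E[e^{\lambda\|X\|}-\lambda\|X\|\mid\mathcal F]$, the elementary bound $e^{a}-a\le e^{a^2}$, and Ville's inequality for the nonnegative supermartingale $\cosh(\lambda\|S_t\|)\exp(-\lambda^2\sum_{s\le t}R_s^2)$. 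Together they give the uniform-in-$t$ bound directly and with slack. You get $2\sqrt2\sqrt{V\log(2/\delta)}$ at failure probability $\delta/2$, and thresholding at $1/\delta$ instead of $2/\delta$ gives $2\sqrt{V\log(2/\delta)}$ at level $\delta$. The citation buys the paper brevity. Your argument shows that the Hilbert structure enters only through the one-step bound, and it yields a better constant.

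Two details of the one-step bound should be written out rather than left to ``Gr\"onwall-type integration''.

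First, with $a=\lambda\|X\|$, the inequality $u''\le a^2u$ integrated pathwise by ODE comparison only gives $u(1)\le u(0)\cosh a+u'(0)\sinh(a)/a$. Here $u'(0)$ carries a random coefficient, so $\mathbb E[u'(0)\mid\mathcal F]=0$ does not apply directly; you would additionally need $|u'(0)|\le a\,u(0)$. The cleaner standard route is Taylor's formula $u(1)=u(0)+u'(0)+\int_0^1(1-\tau)u''(\tau)\,d\tau$ combined with the envelope $u(\tau)\le\cosh(\lambda\|S\|)e^{\lambda\tau\|X\|}$. This envelope follows from $\bigl|\|S+\tau X\|-\|S\|\bigr|\le\tau\|X\|$ and $\cosh(x+y)\le\cosh(x)e^{|y|}$. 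Since $\int_0^1(1-\tau)a^2e^{a\tau}\,d\tau=e^a-1-a$, the only first-order term is $u'(0)$, which vanishes in conditional expectation.

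Second, write $g(\tau)=\|S+\tau X\|$. Getting the constant $1$ in $u''\le\lambda^2\|X\|^2u$ requires the exact identity $g''=(\|X\|^2-(g')^2)/g$, so that the $(g')^2$ terms cancel. This is the orthogonal-complement projection in the Hessian of the norm that you allude to. The three facts you actually list ($|g'|\le\|X\|$, $g''\le\|X\|^2/g$, $\sinh z\le z\cosh z$) only yield $u''\le 2\lambda^2\|X\|^2u$. That loss is harmless here. The per-step factor becomes $1+2(e^a-1-a)\le 2e^{a^2}-1\le e^{2a^2}$, and re-optimizing $\lambda$ gives $2\sqrt{2V\log(4/\delta)}\le 4\sqrt{V\log(2/\delta)}$, which is exactly the stated constant.

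Finally, drop the preliminary display with $e^{-\lambda^2R_s^2/2}c_s$ and the $(1+o(1))$ heuristic. The per-step factor you actually establish is $e^{\lambda^2R_s^2}$.
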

	\begin{lem}
		[Recursive control on MGF,
		{\cite[Proposition~29]{Cutler2023Stochastic}}]
		\label{RS_lem_RecursiveControlMGF}
		Consider scalar stochastic processes $(V_t)$, $(U_t)$, and $(X_t)$ on a probability space with filtration $(\mathcal{H}_t)$ that are linked by the inequality
		\begin{equation}\label{RS_equ_RecursiveControlMGF_recursion}
			V_{t+1} \leq \alpha_t V_t + U_t \sqrt{V_t} + X_t + \kappa_t
		\end{equation}
		for some deterministic constants $\alpha_t \in (-\infty, 1]$ and $\kappa_t \in \mathbb{R}$. Suppose the following properties hold:
		
		(a) $V_t$ is nonnegative and $\mathcal{H}_t$-measurable.
		
		(b) $U_t$ is mean-zero sub-Gaussian conditioned on $\mathcal{H}_t$ with deterministic parameter $\sigma_t$:
		\[
		\mathbb{E}[\exp(\lambda U_t) \mid \mathcal{H}_t] \leq \exp(\lambda^2 \sigma_t^2 / 2), \quad \forall \lambda \in \mathbb{R}.
		\]
		
		(c) $X_t$ is nonnegative and sub-exponential conditioned on $\mathcal{H}_t$ with deterministic parameter $\nu_t$:
		\[
		\mathbb{E}[\exp(\lambda X_t) \mid \mathcal{H}_t] \leq \exp(\lambda \nu_t), \quad \forall \lambda \in [0, 1/\nu_t].
		\]
		
		Then the estimate
		\[
		\mathbb{E}[\exp(\lambda V_{t+1})] \leq \exp(\lambda (\nu_t + \kappa_t)) \mathbb{E}[\exp(\lambda (1 + \alpha_t) V_t / 2)]
		\]
		holds for any $\lambda$ satisfying $0 \leq \lambda \leq \min \left( \frac{1 - \alpha_t}{2 \sigma_t^2}, \frac{1}{2 \nu_t} \right)$.
	\end{lem}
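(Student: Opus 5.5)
The plan is to prove the one-step bound by conditioning on \(\mathcal H_t\) and separating the sub-Gaussian cross term from the sub-exponential term with a conditional Cauchy--Schwarz inequality. Then I absorb the resulting quadratic-in-\(\lambda\) term into the contraction coefficient \(\alpha_t\). Fix \(\lambda\) with \(0\le \lambda\le \min\{(1-\alpha_t)/(2\sigma_t^2),\,1/(2\nu_t)\}\). Since \(\lambda\ge 0\), exponentiating the recursion \eqref{RS_equ_RecursiveControlMGF_recursion} preserves the inequality and gives
\[
\exp(\lambda V_{t+1})\le \exp\bigl(\lambda\alpha_t V_t+\lambda\kappa_t\bigr)\,\exp\bigl(\lambda U_t\sqrt{V_t}\bigr)\,\exp(\lambda X_t).
\]
By property (a), \(V_t\) is nonnegative and \(\mathcal H_t\)-measurable, and \(\alpha_t,\kappa_t\) are deterministic. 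So the first factor can be pulled out of \(\mathbb E[\,\cdot\mid\mathcal H_t]\).

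For the remaining product I use the conditional Cauchy--Schwarz inequality:
\[
\mathbb E\bigl[e^{\lambda U_t\sqrt{V_t}}e^{\lambda X_t}\mid\mathcal H_t\bigr]\le \Bigl(\mathbb E\bigl[e^{2\lambda\sqrt{V_t}\,U_t}\mid\mathcal H_t\bigr]\Bigr)^{1/2}\Bigl(\mathbb E\bigl[e^{2\lambda X_t}\mid\mathcal H_t\bigr]\Bigr)^{1/2}.
\]
\begin{itemize}
\item \emph{Sub-Gaussian factor.} Property (b) holds for every real parameter. I apply it with the \(\mathcal H_t\)-measurable parameter \(2\lambda\sqrt{V_t}\), which is legitimate because the conditional bound holds pointwise for each fixed value and \(\sqrt{V_t}\) is known given \(\mathcal H_t\). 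This gives \(\mathbb E[e^{2\lambda\sqrt{V_t}U_t}\mid\mathcal H_t]\le \exp(2\lambda^2\sigma_t^2V_t)\), so its square root is \(\exp(\lambda^2\sigma_t^2V_t)\).
\item \emph{Sub-exponential factor.} The restriction \(\lambda\le 1/(2\nu_t)\) means \(2\lambda\in[0,1/\nu_t]\), so property (c) yields \(\mathbb E[e^{2\lambda X_t}\mid\mathcal H_t]\le e^{2\lambda\nu_t}\), with square root \(e^{\lambda\nu_t}\).
\end{itemize}
Combining, I get
\[
\mathbb E[\exp(\lambda V_{t+1})\mid\mathcal H_t]\le \exp\bigl(\lambda(\kappa_t+\nu_t)\bigr)\exp\bigl(\lambda(\alpha_t+\lambda\sigma_t^2)V_t\bigr).
\]

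The step that needs care, and the reason for the first constraint on \(\lambda\), is absorbing the \(\lambda^2\sigma_t^2V_t\) term. Since \(\lambda\le (1-\alpha_t)/(2\sigma_t^2)\) and \(V_t\ge0\), we have \(\lambda\sigma_t^2 V_t\le \tfrac{1-\alpha_t}{2}V_t\). Hence \(\alpha_t+\lambda\sigma_t^2\le (1+\alpha_t)/2\) multiplies \(V_t\) in the exponent, and this is where \(\alpha_t\le1\) guarantees the admissible \(\lambda\)-range is nonempty. The degenerate cases \(\sigma_t=0\) or \(\nu_t=0\) are read with the convention that the corresponding constraint is vacuous, and the same argument goes through.

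Finally I take total expectations using the tower property. The prefactor \(\exp(\lambda(\kappa_t+\nu_t))\) is deterministic, so this yields \(\mathbb E[\exp(\lambda V_{t+1})]\le \exp(\lambda(\nu_t+\kappa_t))\,\mathbb E[\exp(\lambda(1+\alpha_t)V_t/2)]\), as claimed. If \(\mathbb E[\exp(\lambda(1+\alpha_t)V_t/2)]=\infty\), the inequality holds trivially.
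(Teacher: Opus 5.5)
Your proof is correct and is essentially the standard argument behind the cited result \cite[Proposition~29]{Cutler2023Stochastic}, which the paper imports without reproducing: condition on $\mathcal{H}_t$, separate the cross term from the sub-exponential term by conditional Cauchy--Schwarz, apply (b) with the $\mathcal{H}_t$-measurable parameter $2\lambda\sqrt{V_t}$ and (c) with $2\lambda\le 1/\nu_t$, and then absorb $\lambda^2\sigma_t^2V_t$ using $\lambda\le(1-\alpha_t)/(2\sigma_t^2)$. One step deserves an extra line of rigor: substituting a random $\mathcal{H}_t$-measurable parameter into the conditional MGF bound, whose exceptional null set a priori depends on $\lambda$; you can handle it by first using countably-valued parameters and passing to the limit with conditional Fatou, or by working with a regular conditional distribution of $U_t$ given $\mathcal{H}_t$.
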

	
	\begin{theorem}
		\label{RS_thm_HP_NSGDAM}\label{RS_thm_NSGDA-M_highProba}
		Let \(L_{x,0},L_{x,1},L_{y,0},L_{y,1}\) denote the generalized
		smoothness constants, \(\sigma_x,\sigma_y\) denote the noise
		parameters, and \(T\) denote the iteration horizon. Choose
		\[
		\begin{aligned}
			1-\beta
			=
			\frac{1}{\sqrt{T\ell_T}},
			\quad
			\eta_y
			=
			\frac{1-\beta}{\mu}
			\left(
			\frac{4\kappa^2\mu^2}
			{121\sigma_y^2(1+\kappa)^2L_{x,0}^2}
			\right)^{1/3},\quad
			\eta_x
			=
			\frac{1-\beta}{(1+\kappa)L_{x,0}}
			\left(
			\frac{\ell_T}{T}
			\right)^{1/4},
		\end{aligned}
		\]
		where $L_y$ and $\kappa$ are defined in \eqref{RS_equ:conditionNum_def}, \(\delta\in(0,1)\) such that \(\ell_T:=\log\frac{2T}{\delta}\), and \(T\geq2\). 
		Suppose that \(T\) is sufficiently large such that
		\[
		\eta_y
		\leq
		\min\left\{
		\frac{1}{5\mu},
		\,
		\frac{\mu}{4L_y^2},
		\,
		\frac{\mu}
		{
			18
			\left(
			\frac{3751}{10}+\frac{22}{\log4}
			\right)
			\sigma_y^2L_{x,1}^2\ell_T
		}
		\right\},\quad
		\eta_x
		\leq
		\frac{1-\beta}
		{8(1+\kappa)L_{x,1}}.
		\]
		Suppose also that the initial tracking error satisfies \[\|y^0-y^*(x^0)\| \leq \frac{1}{16L_{x,1}}, \] and Assumptions
		\ref{RS_ass_PhiLowerBound}--\ref{RS_ass_gradienError_proba}
		hold. Then there exists a constant \(C>0\), independent of \(T\) and
		\(\delta\), such that
		\[
		\frac1T
		\sum_{t=0}^{T-1}
		\|\nabla\Phi(x^t)\|
		\leq
		C
		\left(
		\frac{\ell_T}{T}
		\right)^{1/4}
		\]
		holds with probability at least \(1-\delta\).
	\end{theorem}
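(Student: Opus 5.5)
The proof proceeds by a stopping-time localization argument, organized around three quantities: the primal descent of \(\Phi\), the momentum error \(\epsilon^t:=m^{t+1}-\nabla\Phi(x^t)\), and the tracking error \(\delta_t:=\|y^t-y^*(x^t)\|^2\).

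\textbf{Stopping time.} We define
\[
\tau:=\min\bigl\{t:\ \|y^t-y^*(x^t)\|>\tfrac{1}{8L_{x,1}}\bigr\}\wedge T .
\]
Before \(\tau\), every step stays inside the region where the earlier results apply:
\begin{itemize}
\item \(\|x^{t+1}-x^t\|=\eta_x\le \frac{1-\beta}{8(1+\kappa)L_{x,1}}\), so Lemmas~\ref{RS_lem_LipSolution} and~\ref{RS_lem_primalSmoothness} can be used.
\item \(\|(x^t,y^t)-(x^t,y^*(x^t))\|\le \frac1{L_{x,1}}\), so Assumption~\ref{RS_ass_L0L1Smooth} yields \(\|\nabla_x\mathcal L(x^t,y^t)-\nabla\Phi(x^t)\|\le (L_{x,0}+L_{x,1}\|\nabla\Phi(x^t)\|)\sqrt{\delta_t}\le L_{x,0}\sqrt{\delta_t}+\tfrac18\|\nabla\Phi(x^t)\|\).
\end{itemize}

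\textbf{Descent step.} Lemma~\ref{RS_lem_primalSmoothness}(b) together with the standard normalized-step inequality \(\langle\nabla\Phi,m/\|m\|\rangle\ge \|\nabla\Phi\|-2\|m-\nabla\Phi\|\) gives
\[
\Phi(x^{t+1})\le\Phi(x^t)-\eta_x\|\nabla\Phi(x^t)\|+2\eta_x\|\epsilon^t\|+\tfrac{(1+\kappa)\eta_x^2}{2}\bigl(L_{x,0}+L_{x,1}\|\nabla\Phi(x^t)\|\bigr).
\]
The condition \(\eta_x\le\frac{1}{8(1+\kappa)L_{x,1}}\) lets the \(L_{x,1}\) term be absorbed into \(-\eta_x\|\nabla\Phi\|\).

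\textbf{Momentum error.} Unrolling the recursion,
\[
\epsilon^t=\sum_{s\le t}\beta^{t-s}(1-\beta)\bigl[\text{noise}_s+\text{bias}_s\bigr]+\sum_{s}\beta^{t-s}\bigl(\nabla\Phi(x^{s-1})-\nabla\Phi(x^{s})\bigr)+\beta^{t+1}(\cdot).
\]
Each piece is handled separately:
\begin{itemize}
\item \emph{Drift terms.} By Lemma~\ref{RS_lem_primalSmoothness}(a) these are bounded by \((1+\kappa)\eta_x(L_{x,0}+L_{x,1}\|\nabla\Phi\|)/(1-\beta)\). The \(L_{x,1}\) part is again absorbed, now using \(\eta_x\le(1-\beta)/(8(1+\kappa)L_{x,1})\).
\item \emph{Bias terms.} These involve \(L_{x,0}\sqrt{\delta_s}\) and \(\tfrac18\|\nabla\Phi(x^s)\|\); the latter are absorbed after summation.
\item \emph{Noise terms.} These are stopped martingale differences (multiply by \(\mathbf 1\{s<\tau\}\)), bounded by \(\sigma_x\) almost surely. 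We apply Lemma~\ref{RS_lem_martingale} for each \(t\), splitting \(\beta^{t-s}=\beta^t\beta^{-s}\), or more simply to the weighted sum. A union bound over \(t\le T\) with \(\delta/2T\) gives \(\|\text{noise part}\|\lesssim \sigma_x\sqrt{(1-\beta)\ell_T}\).
\end{itemize}

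\textbf{Tracking error.} For \(t<\tau\), the standard projected SGA contraction (using \(\eta_y\le\mu/(4L_y^2)\), \(\eta_y\le1/(5\mu)\), strong concavity, and Lemma~\ref{RS_lem_LipSolution} with \(\|x^{t+1}-x^t\|=\eta_x\)) gives
\[
\delta_{t+1}\le(1-\tfrac{\mu\eta_y}{2})\delta_t+U_t\sqrt{\delta_t}+X_t+c\,\kappa^2\eta_x^2/(\mu\eta_y).
\]
Here \(U_t=2\eta_y\langle\text{noise}_y,\cdot\rangle\) is sub-Gaussian with parameter \(\propto\eta_y\sigma_y\), and \(X_t=2\eta_y^2\|\text{noise}_y\|^2\) is sub-exponential with \(\nu\propto\eta_y^2\sigma_y^2\); both follow from Assumption~\ref{RS_ass_gradienError_proba}(b). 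Iterating Lemma~\ref{RS_lem_RecursiveControlMGF} on the stopped process \(\delta_{t\wedge\tau}\) bounds \(\mathbb E\exp(\lambda\delta_{t\wedge\tau})\) for \(\lambda\asymp\mu/(\eta_y\sigma_y^2)\). Markov's inequality and a union bound then give, with probability \(1-\delta/2\),
\[
\delta_t\lesssim \delta_0(1-\mu\eta_y/2)^t+\eta_y\sigma_y^2\ell_T/\mu+\kappa^2\eta_x^2/(\mu\eta_y)^2 \quad\text{for all } t\le \tau.
\]
The third condition on \(\eta_y\) (the constant involving \(3751/10\)) is exactly what makes this bound at most \((1/(16L_{x,1}))^2\)-ish, strictly below the threshold \((1/(8L_{x,1}))^2\), so that on this event \(\tau=T\). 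This consistency closure, where the stopping-time threshold must be reproduced by the high-probability bound, is the main obstacle. The delicate part is setting up the MGF recursion correctly with the stopping indicator so that the hypotheses of Lemma~\ref{RS_lem_RecursiveControlMGF} (a)–(c) remain valid after stopping. I would define \(V_t=\delta_{t\wedge\tau}\) and set \(U_t,X_t\) to zero after \(\tau\).

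\textbf{Telescoping and parameter choice.} On the intersection event (probability \(\ge1-\delta\)), we telescope the descent inequality over \(t<T\) and sum \(\sqrt{\delta_t}\) (the initial-error geometric sum contributes \(O(1/(\mu\eta_y))\)). This yields
\[
\frac1T\sum_{t=0}^{T-1}\|\nabla\Phi(x^t)\|\lesssim \frac{\Phi(x^0)-\Phi^*}{T\eta_x}+\sigma_x\sqrt{(1-\beta)\ell_T}+\frac{(1+\kappa)L_{x,0}\eta_x}{1-\beta}+L_{x,0}\sqrt{\eta_y\sigma_y^2\ell_T/\mu}+\frac{L_{x,0}\kappa\eta_x}{\mu\eta_y}+\frac{\dots}{T\mu\eta_y}.
\]
Substituting \(1-\beta=(T\ell_T)^{-1/2}\) and the given \(\eta_x,\eta_y\) (so that \(\eta_x/\eta_y\) and \(\eta_y\ell_T\) scale like \((\ell_T/T)^{1/4}\) or better), each term is \(O((\ell_T/T)^{1/4})\), with a constant \(C\) independent of \(T\) and \(\delta\).
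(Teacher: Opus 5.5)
Your overall architecture matches the paper's: the normalized descent inequality, the unrolled momentum error split into drift, bias and noise, stopping-time localization of the tracking error via Lemma~\ref{RS_lem_RecursiveControlMGF}, and closure by contradiction. However, the localization step, which you yourself flag as the crux, does not work as you set it up.

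The frozen process $V_t=\delta_{t\wedge\tau}$, with $U_t=X_t=0$ after $\tau$, violates the recursion hypothesis of Lemma~\ref{RS_lem_RecursiveControlMGF}. For $t\ge\tau$ you have $V_{t+1}=V_t=\|y^\tau-y^*(x^\tau)\|^2$, so the recursion would require $(1-\alpha_t)\|y^\tau-y^*(x^\tau)\|^2\le\kappa_t$, i.e.\ $\|y^\tau-y^*(x^\tau)\|^2=O(\kappa^2\eta_x^2/(\mu\eta_y)^2)$. This is false exactly on $\{\tau<T\}$, where that quantity exceeds the threshold. Since $\alpha_t$ must be deterministic, you cannot switch to $\alpha_t=1$ after $\tau$ either. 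The paper \emph{kills} the process rather than freezing it: $\widetilde V_t=\|y^t-y^*(x^t)\|^2\mathbf 1_{\{t\le\tau\}}$, with the direction $\widetilde\omega_t$ inside $U_t$ and the term $X_t$ both carrying $\mathbf 1_{\{t<\tau\}}$. For $t\ge\tau$ the left-hand side is then $0$ and the recursion holds trivially. Meanwhile $\widetilde V_\tau=\|y^\tau-y^*(x^\tau)\|^2$ is still controlled by the MGF/Markov bound at time $\tau$, which is all the contradiction argument needs.

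The closure also fails at your threshold $1/(8L_{x,1})$. The third condition on $\eta_y$ is calibrated to the constants the MGF argument actually produces:
\begin{itemize}
\item the contraction $\alpha_t=1-\mu\eta_y$, from $(1+\tfrac{\mu\eta_y}{2})(1-\tfrac{3\mu\eta_y}{2})\le 1-\mu\eta_y$;
\item the tail-to-MGF constant $c=5$;
\item $\lambda_0=\mu/(200(1+\mu\eta_y/2)^2\eta_y\sigma_y^2)$.
\end{itemize}
With these, the condition gives only $\Gamma_y\le(\tfrac{3751}{10}+\tfrac{22}{\log4})\eta_y\sigma_y^2\ell_T/\mu\le\tfrac{1}{18L_{x,1}^2}<\tfrac{15}{256L_{x,1}^2}$. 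Adding $\Delta_{y,0}^2\le\tfrac{1}{256L_{x,1}^2}$, the total bound is $\tfrac{1}{16L_{x,1}^2}$. That is the square of $1/(4L_{x,1})$, not something of order $(1/(16L_{x,1}))^2$ as you claim. It exceeds $1/(64L_{x,1}^2)$, so under the stated hypotheses your stopping event is not ruled out.

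Your looser contraction $1-\mu\eta_y/2$ makes things worse. Both $1/(1-\alpha)$ and $1/\lambda_0$ scale like $1/(1-\alpha_t)$, so $\Gamma_y$ roughly doubles. With the same constant chain this breaks the closure even at threshold $1/(4L_{x,1})$, because the margin between $1/18$ and $15/256$ is thin.

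The fix is the paper's: use threshold $1/(4L_{x,1})$ and the sharp contraction. Absorb the bias with $\tfrac14\|\nabla\Phi(x^t)\|$; after the factor $2$ in the descent inequality this becomes $\tfrac12$, and $\eta_x\le(1-\beta)/(8L_1)$ still leaves $C_0\ge\tfrac14$.

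A minor point on the noise term. The splitting $\beta^{t-s}=\beta^t\beta^{-s}$ you mention would fail with Lemma~\ref{RS_lem_martingale}: its bound involves $\sum_{s\le T}R_s^2$ and so produces a factor $\beta^{t-T}$, which is huge for $t\ll T$. Your per-$t$ union bound does work. It costs only $\sqrt{\ell_T}$ in place of the $\sqrt{\log(4/\delta)}$ the paper obtains by embedding all partial sums in $(\mathbb R^n)^T$ and applying the lemma once, and this is harmless for the $(\ell_T/T)^{1/4}$ rate.
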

	
	\begin{proof}
		Since \(\Phi(\cdot)\) is \((L_0,L_1)\)-smooth with \(L_0:=(1+\kappa)L_{x,0}\) and \(L_1:=(1+\kappa)L_{x,1}\), we have by the update rule of \(x\) in \eqref{RS_equ:alg_x} and \(\eta_x\leq \frac{1}{L_1}\) that
		\[
		\begin{aligned}
			\Phi(x^{t+1})
			\leq{}&
			\Phi(x^t)
			+
			\left\langle
			\nabla\Phi(x^t),
			x^{t+1}-x^t
			\right\rangle
			+
			\frac{L_0+L_1\|\nabla\Phi(x^t)\|}{2}
			\|x^{t+1}-x^t\|^2
			\\
			\leq{}&
			\Phi(x^t)
			-
			\eta_x
			\left\langle
			\nabla\Phi(x^t),
			\frac{m^{t+1}}{\|m^{t+1}\|}
			\right\rangle
			+
			\frac{\eta_x^2}{2}
			\left(
			L_0+L_1\|\nabla\Phi(x^t)\|
			\right)
			\\
			={}&
			\Phi(x^t)
			-
			\eta_x
			\left\langle
			m^{t+1}-\varepsilon_t,
			\frac{m^{t+1}}{\|m^{t+1}\|}
			\right\rangle
			+
			\frac{\eta_x^2}{2}
			\left(
			L_0+L_1\|\nabla\Phi(x^t)\|
			\right)
			\\
			\leq{}&
			\Phi(x^t)
			-
			\eta_x\|m^{t+1}\|
			+
			\eta_x\|\varepsilon_t\|
			+
			\frac{\eta_x^2}{2}
			\left(
			L_0+L_1\|\nabla\Phi(x^t)\|
			\right)
			\\
			\leq{}&
			\Phi(x^t)
			-
			\eta_x\|\nabla\Phi(x^t)\|
			+
			2\eta_x\|\varepsilon_t\|
			+
			\frac{\eta_x^2}{2}
			\left(
			L_0+L_1\|\nabla\Phi(x^t)\|
			\right),
		\end{aligned}
		\]
		where \(\varepsilon_t:=m^{t+1}-\nabla\Phi(x^t)\), the second inequality follows from
		Cauchy--Schwarz inequality, and the last inequality follows from the fact
		\[
		\|m^{t+1}\|
		=
		\|\nabla\Phi(x^t)+\varepsilon_t\|
		\geq
		\|\nabla\Phi(x^t)\|
		-
		\|\varepsilon_t\|.
		\]
		
		Summing the above inequality over \(t=0,\ldots,T-1\) yields
		\begin{equation}
			\label{RS_equ_HP_descent}
			\left(
			1-\frac{\eta_xL_1}{2}
			\right)
			\frac{1}{T}
			\sum_{t=0}^{T-1}
			\|\nabla\Phi(x^t)\|
			\leq
			\frac{2}{T}
			\sum_{t=0}^{T-1}\|\varepsilon_t\|
			+
			\frac{\Phi(x^0)-\Phi^*}{T\eta_x}
			+
			\frac{\eta_xL_0}{2}.
		\end{equation}
		For the first term on the right-hand side of \eqref{RS_equ_HP_descent}, 
		we have by the definition of \(\varepsilon_t\) and the update rule of $m^t$ in \eqref{RS_equ:alg_m},
		\[
		\begin{aligned}
			\varepsilon_{t+1}
			={}&
			m^{t+2}-\nabla\Phi(x^{t+1})                         \\
			={}&
			\beta m^{t+1}
			+
			(1-\beta)G_x(x^{t+1},y^{t+1},\xi^{t+1})
			-
			\nabla\Phi(x^{t+1})                                  \\
			={}&
			\beta\varepsilon_t
			+
			\beta
			\left(
			\nabla\Phi(x^t)-\nabla\Phi(x^{t+1})
			\right)
			+
			(1-\beta)\widehat{\varepsilon}_{t+1},\quad \forall t\geq0,
		\end{aligned}
		\]
		where 
		\begin{equation}
			\label{eq:gradient_estimator_error}
			\widehat{\varepsilon}_t
			:=
			G_x(x^t,y^t,\xi^t)-\nabla\Phi(x^t).
		\end{equation}
		and the initial term is given by
		\[
		\varepsilon_0
		=
		m^1-\nabla\Phi(x^0)
		=
		\beta
		\left(
		m^0-\nabla\Phi(x^0)
		\right)
		+
		(1-\beta)\widehat{\varepsilon}_0.
		\]	
		Since
		\(\|x^{i+1}-x^i\|\leq\eta_x\leq1/L_1\),
		Lemma~\ref{RS_lem_primalSmoothness}(a) applies. Iterating the
		preceding recursion with the resulting bound yields
		\begin{equation}\label{eq:momentum_error_bound}
			\begin{aligned}
				\|\varepsilon_t\|
				\leq{}&
				\beta^{t+1}
				\|m^0-\nabla\Phi(x^0)\|                              
				+
				\eta_x\beta
				\sum_{i=0}^{t-1}
				\beta^{t-i-1}
				\left(
				L_0+L_1\|\nabla\Phi(x^i)\|
				\right)                                                \\
				&+
				(1-\beta)
				\left\|
				\sum_{i=0}^{t}
				\beta^{t-i}
				\widehat{\varepsilon}_i
				\right\|.
			\end{aligned}
		\end{equation}
		Summing the above inequality over \(t=0,\ldots,T-1\) and applying the
		triangle inequality, we obtain
		\begin{equation}\label{RS_equ_HP_momentum_error_bound}
			\begin{aligned}
				\sum_{t=0}^{T-1}
				\|\varepsilon_t\|
				\leq{}&
				\underbrace{
					(1-\beta)
					\sum_{t=0}^{T-1}
					\left\|
					\sum_{i=0}^{t}
					\beta^{t-i}
					\Delta_x^i
					\right\|}_{:=I_1}+
				\frac{\eta_xL_1\beta}{1-\beta}
				\sum_{t=0}^{T-1}
				\|\nabla\Phi(x^t)\|
				\\
				&\underbrace{+
					(1-\beta)
					\sum_{t=0}^{T-1}
					\sum_{i=0}^{t}
					\beta^{t-i}
					\left\|
					\nabla_x\mathcal L(x^i,y^i)
					-
					\nabla\Phi(x^i)
					\right\|}_{:=I_2}
				\\
				&+
				\frac{T\eta_xL_0\beta}{1-\beta}
				+
				\frac{\beta}{1-\beta}
				\|m^0-\nabla\Phi(x^0)\|,
			\end{aligned}
		\end{equation}
		where \(\Delta_x^i:=G_x(x^i,y^i,\xi^i)-\nabla_x\mathcal L(x^i,y^i).\)

		To bound \(I_1\), for each \(i=0,\ldots,T-1\), define
		\(Z_i\in(\mathbb{R}^n)^T\) by
		\[
		(Z_i)_r
		:=
		\begin{cases}
			\beta^{r-i}\Delta_x^i,
			& i\leq r\leq T-1,\\
			\mathbf{0},
			& 0\leq r<i,
		\end{cases}
		\qquad
		r=0,\ldots,T-1,
		\]
		where \(\beta\in (0,1)\), \(\mathbf{0}\in\mathbb{R}^n\), and \(\Delta_x^i\in\mathbb{R}^n\).
		Note that \(\Delta_x^i\) is
		\(\mathcal F_{i+1}\)-measurable, and hence \(Z_i\) is
		\(\mathcal F_{i+1}\)-measurable. Moreover, by
		Assumption~\ref{RR_ass:unbiased},
		\[
		\mathbb E
		\left[
		Z_i
		\,\middle|\,
		\mathcal F_i
		\right]
		=
		0.
		\]
		Thus, \((Z_i)_{i=0}^{T-1}\) is a martingale difference
		sequence adapted to \((\mathcal F_j)_{j=1}^{T}\).
		Furthermore, by Assumption~\ref{RS_ass_gradienError_proba}(a) and
		\(0<\beta<1\),
		\[
		\begin{aligned}
			\|Z_i\|^2
			=
			\sum_{r=i}^{T-1}
			\beta^{2(r-i)}
			\|\Delta_x^i\|^2
			\leq
			\sigma_x^2
			\sum_{r=i}^{T-1}
			\beta^{2(r-i)}
			\leq
			\frac{\sigma_x^2}{1-\beta^2}
			\qquad
			\text{a.s.}
		\end{aligned}
		\]
		Applying Lemma~\ref{RS_lem_martingale} with confidence parameter \(\delta/2\) and
		the deterministic bounds
		\[
		R_i
		=
		\frac{\sigma_x}{\sqrt{1-\beta^2}},
		\qquad
		i=0,\ldots,T-1,
		\]
		we obtain
		\[
		\mathbb P(\mathcal E_x)
		\geq
		1-\frac{\delta}{2},
		\]
		where
		\[
		\mathcal E_x
		:=
		\left\{
		\left\|
		\sum_{i=0}^{T-1}Z_i
		\right\|
		\leq
		4\sigma_x
		\sqrt{
			\frac{T\log(4/\delta)}
			{1-\beta^2}
		}
		\right\}.
		\]
		
		Note that, for each \(t=0,\ldots,T-1\), the \(t\)-th coordinate of
		\(\sum_{i=0}^{T-1}Z_i\) is given by
		\[
		\left(
		\sum_{i=0}^{T-1}Z_i
		\right)_t
		=
		\sum_{i=0}^{t}
		\beta^{t-i}\Delta_x^i.
		\]
		Thus, when the event \(\mathcal E_x\) holds,
		\begin{equation*}
			\begin{aligned}
				\sum_{t=0}^{T-1}
				\left\|
				\sum_{i=0}^{t}
				\beta^{t-i}\Delta_x^i
				\right\|^2
				=
				\left\|
				\sum_{i=0}^{T-1}Z_i
				\right\|^2
				\leq
				\frac{
					16T\sigma_x^2\log(4/\delta)
				}{
					1-\beta^2
				}.
			\end{aligned}
		\end{equation*}
		
		Therefore, by the Cauchy--Schwarz inequality,
		\[
		\begin{aligned}
			I_1
			&\leq
			(1-\beta)\sqrt{T}
			\left(
			\sum_{t=0}^{T-1}
			\left\|
			\sum_{i=0}^{t}
			\beta^{t-i}\Delta_x^i
			\right\|^2
			\right)^{1/2}
			\\
			&\leq
			4T\sigma_x
			\sqrt{
				\frac{1-\beta}{1+\beta}
				\log\frac{4}{\delta}
			}.
		\end{aligned}
		\]

		To bound \(I_2\), we define the stopping time 
		\[
		\tau
		:=
		\inf\left\{
		t\ge0:
		\|y^t-y^*(x^t)\|
		>
		\frac{1}{4L_{x,1}}
		\right\},
		\qquad
		\inf\emptyset:=+\infty.
		\]
		First, we bound the dual tracking error $\|y^t - y^*(x^t)\|^2$ for $t < \tau$. By the update rule for \(y\), the optimality of \(y^*(x^t)\),
		and the nonexpansiveness of projection, we have
		\begin{equation}
			\label{RS_equ_HP_y_track_1}
			\begin{aligned}
				&
				\left\|y^{t+1}-y^*(x^t)\right\|^2
				\\
				={}&
				\left\|
				\operatorname{proj}_{\mathcal Y}
				\left(
				y^t+\eta_yG_y(x^t,y^t,\xi^t)
				\right)
				-
				\operatorname{proj}_{\mathcal Y}
				\left(
				y^*(x^t)
				+
				\eta_y\nabla_y\mathcal L(x^t,y^*(x^t))
				\right)
				\right\|^2
				\\
				\le{}&
				\left\|
				y^t-y^*(x^t)
				+
				\eta_y
				\left(
				G_y(x^t,y^t,\xi^t)
				-
				\nabla_y\mathcal L(x^t,y^*(x^t))
				\right)
				\right\|^2
				\\
				\le{}&
				\left\|y^t-y^*(x^t)\right\|^2
				+
				2\eta_y
				\left\langle
				\nabla_y\mathcal L(x^t,y^t)
				-
				\nabla_y\mathcal L(x^t,y^*(x^t)),
				y^t-y^*(x^t)
				\right\rangle
				\\
				&+
				2\eta_y^2
				\left\|
				\nabla_y\mathcal L(x^t,y^t)
				-
				\nabla_y\mathcal L(x^t,y^*(x^t))
				\right\|^2
				\\
				&+
				2\eta_y
				\left\langle
				G_y(x^t,y^t,\xi^t)
				-
				\nabla_y\mathcal L(x^t,y^t),
				y^t-y^*(x^t)
				\right\rangle
				\\
				&+
				2\eta_y^2
				\left\|
				G_y(x^t,y^t,\xi^t)
				-
				\nabla_y\mathcal L(x^t,y^t)
				\right\|^2 .
			\end{aligned}
		\end{equation}

		For the second term on the right-hand side of
		\eqref{RS_equ_HP_y_track_1}, by the \(\mu\)-strong concavity of
		\(\mathcal L(x^t,\cdot)\),
		\[
		2\eta_y
		\left\langle
		\nabla_y\mathcal L(x^t,y^t)
		-
		\nabla_y\mathcal L(x^t,y^*(x^t)),
		y^t-y^*(x^t)
		\right\rangle
		\le
		-2\mu\eta_y
		\left\|y^t-y^*(x^t)\right\|^2 .
		\]
		For the third term on the right-hand side of \eqref{RS_equ_HP_y_track_1}, 
		Assumptions~\ref{RS_ass_L0L1Smooth} and~\ref{RS_ass_gradyStarBound} imply
		\[
		\begin{aligned}
			&2\eta_y^2
			\left\|
			\nabla_y\mathcal L(x^t,y^t)
			-
			\nabla_y\mathcal L(x^t,y^*(x^t))
			\right\|^2 \\
			\leq{}&
			2\eta_y^2
			\left(
			L_{y,0}
			+
			L_{y,1}
			\|\nabla_y\mathcal L(x^t,y^*(x^t))\|
			\right)^2
			\|y^t-y^*(x^t)\|^2                                      \\
			\leq{}&
			2\eta_y^2L_y^2
			\left\|y^t-y^*(x^t)\right\|^2.
		\end{aligned}
		\]
		
		Substituting the preceding two bounds into
		\eqref{RS_equ_HP_y_track_1} and using the fact \(\eta_y\leq\mu/(4L_y^2)\) yields
		\begin{equation}\label{RS_equ_HP_y_track_2}
			\begin{aligned}
				\left\|y^{t+1}-y^*(x^t)\right\|^2
				\leq &
				\left(
				1-\frac{3\mu\eta_y}{2}
				\right)
				\left\|y^t-y^*(x^t)\right\|^2
				\\
				&+
				2\eta_y
				\left\langle
				G_y(x^t,y^t,\xi^t)
				-
				\nabla_y\mathcal L(x^t,y^t),
				y^t-y^*(x^t)
				\right\rangle
				\\
				&+
				2\eta_y^2
				\left\|
				G_y(x^t,y^t,\xi^t)
				-
				\nabla_y\mathcal L(x^t,y^t)
				\right\|^2.
			\end{aligned}
		\end{equation}
		
		By Young's inequality, we have
		\begin{equation}\label{RS_equ_HP_stopped_recursion}
			\begin{aligned}
				&
				\left\|y^{t+1}-y^*(x^{t+1})\right\|^2
				\\
				\le{}&
				\left(
				1+\frac{\mu\eta_y}{2}
				\right)
				\left\|y^{t+1}-y^*(x^t)\right\|^2
				+
				\left(
				1+\frac{2}{\mu\eta_y}
				\right)
				\left\|y^*(x^{t+1})-y^*(x^t)\right\|^2
				\\
				\le{}&
				\left(
				1-\mu\eta_y
				\right)
				\left\|y^t-y^*(x^t)\right\|^2
				\\
				&+
				2\left(
				1+\frac{\mu\eta_y}{2}
				\right)\eta_y
				\left\langle
				G_y(x^t,y^t,\xi^t)
				-
				\nabla_y\mathcal L(x^t,y^t),
				y^t-y^*(x^t)
				\right\rangle
				\\
				&+
				2\left(
				1+\frac{\mu\eta_y}{2}
				\right)\eta_y^2
				\left\|
				G_y(x^t,y^t,\xi^t)
				-
				\nabla_y\mathcal L(x^t,y^t)
				\right\|^2
				+
				\left(
				1+\frac{2}{\mu\eta_y}
				\right)
				\kappa^2\eta_x^2 ,
			\end{aligned}
		\end{equation}
		where the second inequality follows from
		\eqref{RS_equ_HP_y_track_2},
		Lemma~\ref{RS_lem_LipSolution}, and
		\(\|x^{t+1}-x^t\|\leq\eta_x\).

		To extend the localized recursion \eqref{RS_equ_HP_stopped_recursion} to the whole horizon
		\(t=0,\ldots,T-1\), we define the stopped processes
		\[
		\widetilde V_t
		:=
		\|y^t-y^*(x^t)\|^2
		\mathbf 1_{\{t\leq\tau\}}
		\]	
		and 
		\[
		\widetilde\omega_t
		:=
		\begin{cases}
			\displaystyle
			\frac{y^t-y^*(x^t)}
			{\|y^t-y^*(x^t)\|}
			\mathbf 1_{\{t<\tau\}},
			& y^t\neq y^*(x^t),
			\\[1.2ex]
			0,
			& y^t=y^*(x^t).
		\end{cases}
		\]
		By  the definition of the stopped processes and \eqref{RS_equ_HP_stopped_recursion}, we obtain, for \(t=0,\ldots,T-1\),
		\begin{equation}
			\label{RS_equ_HP_stopped_process_recursion}
			\widetilde V_{t+1}
			\leq
			\alpha_t\widetilde V_t
			+
			U_t\sqrt{\widetilde V_t}
			+
			X_t
			+
			\kappa_t,
		\end{equation}
		with
		\[
		U_t
		:=
		2\left(
		1+\frac{\mu\eta_y}{2}
		\right)\eta_y
		\left\langle
		\Delta_y^t,\widetilde\omega_t
		\right\rangle,
		\qquad
		X_t
		:=
		2\left(
		1+\frac{\mu\eta_y}{2}
		\right)\eta_y^2
		\|\Delta_y^t\|^2
		\mathbf 1_{\{t<\tau\}},
		\]
		and
		\[
		\alpha_t:=1-\mu\eta_y,\qquad
		\kappa_t
		:=
		\left(
		1+\frac{2}{\mu\eta_y}
		\right)
		\kappa^2\eta_x^2,
		\]
		where
		\(
		\Delta_y^t
		:=
		G_y(x^t,y^t,\xi^t)
		-
		\nabla_y\mathcal L(x^t,y^t).
		\)
		The parameter choice of \(\eta_y\) gives
		\[
		1-\mu\eta_y\in(0,1).
		\]
		Note that \eqref{RS_equ_HP_stopped_process_recursion} is in the form of \eqref{RS_equ_RecursiveControlMGF_recursion} in Lemma~\ref{RS_lem_RecursiveControlMGF}.

		Next, we verify conditions (a)--(c) of Lemma~\ref{RS_lem_RecursiveControlMGF}.
		Obviously, \(\widetilde V_t\) is nonnegative and \(\mathcal F_t\)-measurable, so condition~(a) holds.
		Since \(\widetilde\omega_t\) is
		\(\mathcal F_t\)-measurable and Assumption~\ref{RR_ass:unbiased} holds,
		we have
		\[
		\mathbb E[U_t\mid\mathcal F_t]=0.
		\] 
		By Assumptions~\ref{RR_ass:unbiased} and~\ref{RS_ass_gradienError_proba}(b), conditioned on
		\(\mathcal F_t\), \(\Delta_y^t\) is a mean-zero
		norm-sub-Gaussian random vector with parameter \(\sigma_y\).
		Then, we have by \cite[Lemma~3]{jin2019short} and \(\|\widetilde\omega_t\|\leq1\) that,  conditioned on \(\mathcal F_t\), \(U_t\) is sub-Gaussian with parameter 
		\(\sigma_t
		=
		2\eta_y
		\left(
		1+\frac{\mu\eta_y}{2}
		\right)c\sigma_y\), and 
		\(
		X_t
		\)
		is sub-exponential with parameter
		\(\nu_t
		=
		2\eta_y^2
		\left(
		1+\frac{\mu\eta_y}{2}
		\right)c\sigma_y^2\). 
		Here, \(c\geq1\) is a universal tail-to-MGF conversion constant; we
		take \(c=5\), which is admissible for both the projection and
		squared-norm MGF bounds.
		Thus, conditions~(b) and~(c) hold.
		
		Then, Lemma~\ref{RS_lem_RecursiveControlMGF} implies, for any
		\(0\leq \lambda\leq \min\left\{\frac{1-\alpha_t}{2\sigma_t^2},\frac{1}{2\nu_t}\right\}\),
		\begin{equation}\label{RS_equ_HP_MGF_recursive_bound}
			\mathbb E
			\left[
			\exp(\lambda\widetilde V_{t+1})
			\right]
			\leq
			\exp\left(\lambda(\nu_t+\kappa_t)\right)
			\mathbb E
			\left[
			\exp
			\left(
			\lambda\alpha\widetilde V_t
			\right)
			\right],
		\end{equation}
		where \(
		\alpha
		:=
		\frac{1+\alpha_t}{2}
		=
		1-\frac{\mu\eta_y}{2}.
		\)
		
		By the definitions of \(\alpha_t\), \(\sigma_t\), and \(\nu_t\),
		\[
		\lambda_0
		:=
		\frac{\mu}
		{200
			\left(
			1+\frac{\mu\eta_y}{2}
			\right)^2
			\eta_y\sigma_y^2}
		\leq
		\min
		\left\{
		\frac{1-\alpha_t}{2\sigma_t^2},
		\frac{1}{2\nu_t}
		\right\}.
		\]

		For any \(\lambda\in[0,\lambda_0]\), recursively applying
		\eqref{RS_equ_HP_MGF_recursive_bound} yields, for
		\(t=0,\ldots,T-1\),
		\[
		\begin{aligned}
			\mathbb E
			\left[
			\exp(\lambda\widetilde V_t)
			\right]
			&\le
			\exp
			\left(
			\lambda\alpha^t\Delta_{y,0}^2
			+
			\lambda(\nu_t+\kappa_t)
			\sum_{i=0}^{t-1}\alpha^i
			\right)
			\\
			&\le
			\exp
			\left(
			\lambda
			\left[
			\alpha^t\Delta_{y,0}^2
			+
			\left(
			\frac{20\eta_y}{\mu}
			\left(
			1+\frac{\mu\eta_y}{2}
			\right)
			\sigma_y^2
			+
			\frac{2}{\mu\eta_y}
			\left(
			1+\frac{2}{\mu\eta_y}
			\right)
			\kappa^2\eta_x^2
			\right)
			\right]
			\right),
		\end{aligned}
		\]
		where \(\Delta_{y,0}:=\|y^0-y^{*}(x^0)\|\).
		Taking \(\lambda=\lambda_0\) in the preceding inequality and applying Markov's inequality, we obtain, for each
		fixed \(t=0,\ldots,T-1\), with probability at least
		\(1-\delta/(2T)\),
		\begin{equation}\label{RS_equ_HP_ytracking_bound}
			\widetilde V_t
			\le
			\alpha^t\Delta_{y,0}^2
			+\Gamma_y,
		\end{equation}
		where 
		\[\Gamma_y:=
		\frac{20\eta_y}{\mu}
		\left(
		1+\frac{\mu\eta_y}{2}
		\right)
		\sigma_y^2
		+
		\frac{2}{\mu\eta_y}
		\left(
		1+\frac{2}{\mu\eta_y}
		\right)
		\kappa^2\eta_x^2
		+
		\frac{1}{\lambda_0}
		\log\frac{2T}{\delta}.
		\]
		Then, 
		\[
		\mathbb P(\mathcal E_y)
		\geq
		1-\frac{\delta}{2},
		\]
		where 
		\[
		\mathcal E_y
		:=
		\bigcap_{t=0}^{T-1}
		\left\{
		\widetilde V_t
		\le \alpha^t\Delta_{y,0}^2+\Gamma_y
		\right\}.
		\]

		In what follows, by contradiction, we prove that \(\mathcal E_y\) implies \(\tau\ge T\). Suppose that \(\tau<T\); then
		\(\tau\in\{0,\ldots,T-1\}\), and the preceding bound \eqref{RS_equ_HP_ytracking_bound} holds at \(t=\tau\). This implies
		\[
		\begin{aligned}
			\|y^\tau-y^*(x^\tau)\|^2
			&\leq
			\alpha^{\tau}\Delta_{y,0}^2
			+
			\Gamma_y.
		\end{aligned}
		\]

		For \(\Gamma_y\), by the parameter choices in the theorem, 
		\begin{equation*}
			\begin{aligned}
				\Gamma_y\leq{}&\left(\frac{22}{\log4}+242\right)
				\left(
				\frac{\ell_T}{T}
				\right)^{1/2}
				\frac{\sigma_y^2}{\mu^2}
				\left(
				\frac{4\kappa^2\mu^2}
				{121\sigma_y^2(1+\kappa)^2L_{x,0}^2}
				\right)^{1/3}\\
				&+\frac{2}{\mu\eta_y}
				\left(
				1+\frac{2}{\mu\eta_y}
				\right)
				\kappa^2\eta_x^2,
			\end{aligned}
		\end{equation*}
		where the inequality uses \(\mu\eta_y\leq\frac15\) and \(\ell_T\geq\log4\). For the second term on the right-hand side of the above inequality, substituting the choices
		of \(\eta_x\) and \(\eta_y\) yields
		\[
		\begin{aligned}
			\frac{2}{\mu\eta_y}
			\left(
			1+\frac{2}{\mu\eta_y}
			\right)
			\kappa^2\eta_x^2
			={}&
			\left(
			\frac{\ell_T}{T}
			\right)^{1/2}
			\frac{2(1-\beta)\kappa^2}
			{(1+\kappa)^2L_{x,0}^2}
			\left(
			\frac{121\sigma_y^2(1+\kappa)^2L_{x,0}^2}
			{4\kappa^2\mu^2}
			\right)^{1/3}
			\\
			&+
			\left(
			\frac{\ell_T}{T}
			\right)^{1/2}
			\frac{4\kappa^2}
			{(1+\kappa)^2L_{x,0}^2}
			\left(
			\frac{121\sigma_y^2(1+\kappa)^2L_{x,0}^2}
			{4\kappa^2\mu^2}
			\right)^{2/3}.							
		\end{aligned}
		\]
		Moreover, by the definition of \(\eta_y\) and the condition
		\(\eta_y \leq 1/(5\mu)\),
		\[
		1-\beta
		\leq
		\frac15
		\left(
		\frac{121\sigma_y^2(1+\kappa)^2L_{x,0}^2}
		{4\kappa^2\mu^2}
		\right)^{1/3}.
		\]
		Then,					
		\[
		\begin{aligned}
			\frac{2}{\mu\eta_y}
			\left(
			1+\frac{2}{\mu\eta_y}
			\right)
			\kappa^2\eta_x^2
			\leq{}&
			\left(
			\frac{\ell_T}{T}
			\right)^{1/2}
			\frac{22\kappa^2}
			{5(1+\kappa)^2L_{x,0}^2}
			\left(
			\frac{121\sigma_y^2(1+\kappa)^2L_{x,0}^2}
			{4\kappa^2\mu^2}
			\right)^{2/3}
			\\
			={}&
			\frac{1331}{10}
			\left(
			\frac{\ell_T}{T}
			\right)^{1/2}
			\frac{\sigma_y^2}{\mu^2}
			\left(
			\frac{4\kappa^2\mu^2}
			{121\sigma_y^2(1+\kappa)^2L_{x,0}^2}
			\right)^{1/3}.
		\end{aligned}
		\]
		Subsequently,
		\[
		\begin{aligned}
			\Gamma_y
			\leq{}&
			\left(
			\frac{3751}{10}
			+
			\frac{22}{\log4}
			\right)
			\left(
			\frac{\ell_T}{T}
			\right)^{1/2}
			\frac{\sigma_y^2}{\mu^2}
			\left(
			\frac{4\kappa^2\mu^2}
			{121\sigma_y^2(1+\kappa)^2L_{x,0}^2}
			\right)^{1/3}
			\\
			={}&
			\left(
			\frac{3751}{10}
			+
			\frac{22}{\log4}
			\right)
			\frac{\eta_y\sigma_y^2}{\mu}
			\ell_T\\
			\leq{}&
			\frac{15}{256L_{x,1}^2},
		\end{aligned}
		\]
		where the equality uses
		\(
		\left(
		\frac{\ell_T}{T}
		\right)^{1/2}
		=
		(1-\beta)\ell_T
		\)
		and the definition of \(\eta_y\), and the second inequality follows from the upper bound on \(\eta_y\).
		
		It follows that
		\[
		\begin{aligned}
			\|y^\tau-y^*(x^\tau)\|^2
			&\leq
			\alpha^\tau\Delta_{y,0}^2
			+
			\Gamma_y
			\\
			&\leq
			\frac{1}{256L_{x,1}^2}
			+
			\frac{15}{256L_{x,1}^2}
			=
			\frac{1}{16L_{x,1}^2},
		\end{aligned}
		\]
		where the second inequality uses \(\alpha\in (0,1)\) and
		\(\Delta_{y,0}\le1/(16L_{x,1})\).
		Hence,
		\[
		\|y^\tau-y^*(x^\tau)\|
		\leq
		\frac{1}{4L_{x,1}},
		\]
		which contradicts the definition of
		\(\tau\).

		Define
		\[
		\mathcal E:=\mathcal E_x\cap\mathcal E_y,
		\]
		then
		\[
		\mathbb P(\mathcal E)\geq 1-\delta.
		\]
		We proceed to bound \(I_2\) when the event \(\mathcal E\) holds.
		Since \(\mathcal E_y\) implies \(\tau\ge T\), we have by Assumption~\ref{RS_ass_L0L1Smooth} that
		\begin{equation*}
			\begin{aligned}
				I_2\leq{}&\underbrace{
					(1-\beta)L_{x,0}
					\sum_{t=0}^{T-1}
					\sum_{i=0}^{t}
					\beta^{t-i}
					\|y^i-y^*(x^i)\|}_{=:I_3}\\
				&+
				\underbrace{
					(1-\beta)L_{x,1}
					\sum_{t=0}^{T-1}
					\sum_{i=0}^{t}
					\beta^{t-i}
					\|\nabla\Phi(x^i)\|
					\|y^i-y^*(x^i)\|}_{=:I_4}
			\end{aligned}
		\end{equation*}

		For \(I_3\), since \(\tau\geq T\), we have
		\[
		\widetilde V_i
		=
		\|y^i-y^*(x^i)\|^2,
		\qquad
		i=0,\ldots,T-1,
		\]
		and the definition of \(\mathcal E_y\) implies
		\[
		\|y^i-y^*(x^i)\|
		\leq
		\alpha^{i/2}\Delta_{y,0}
		+
		\sqrt{\Gamma_y},
		\qquad
		i=0,\ldots,T-1.
		\]
		Then,
		\[
		\begin{aligned}
			I_3
			&\leq
			(1-\beta)L_{x,0}
			\sum_{t=0}^{T-1}
			\sum_{i=0}^{t}
			\beta^{t-i}
			\left(
			\alpha^{i/2}\Delta_{y,0}
			+
			\sqrt{\Gamma_y}
			\right)
			\\
			&\leq
			L_{x,0}\Delta_{y,0}
			\sum_{i=0}^{T-1}\alpha^{i/2}
			+
			TL_{x,0}\sqrt{\Gamma_y} \\
			&\leq
			\frac{4L_{x,0}\Delta_{y,0}}
			{\mu\eta_y}
			+
			TL_{x,0}\sqrt{\Gamma_y},
		\end{aligned}
		\]
		where the last inequality follows from 
		\[
		\begin{aligned}
			\sum_{i=0}^{T-1}\alpha^{i/2}
			\leq
			\frac{1}{1-\sqrt{\alpha}}
			\leq
			\frac{2}{1-\alpha}
			=
			\frac{4}{\mu\eta_y}.
		\end{aligned}
		\]

		For \(I_4\), 
		\[
		\begin{aligned}
			I_4
			&\leq \frac{1}{4}(1-\beta)
			\sum_{t=0}^{T-1}
			\sum_{i=0}^{t}
			\beta^{t-i}
			\|\nabla\Phi(x^i)\|\\
			&=
			\frac{1}{4}
			\sum_{i=0}^{T-1}
			\left(
			(1-\beta)
			\sum_{t=i}^{T-1}
			\beta^{t-i}
			\right)
			\|\nabla\Phi(x^i)\|
			\\
			&\leq
			\frac{1}{4}
			\sum_{t=0}^{T-1}
			\|\nabla\Phi(x^t)\|,
		\end{aligned}
		\]
		where the first inequality uses
		\(\|y^i-y^*(x^i)\|\leq 1/(4L_{x,1})\) for
		\(i=0,\ldots,T-1\), and the last inequality uses \(\beta\in (0,1)\).

		Subsequently,  when the event  \(\mathcal E\) holds, \eqref{RS_equ_HP_momentum_error_bound} gives	
		\begin{equation}
			\label{RS_equ_HP_momentum_sum}
			\begin{aligned}
				\sum_{t=0}^{T-1}
				\|\varepsilon_t\|
				\leq{}&
				4T\sigma_x
				\sqrt{
					\frac{1-\beta}{1+\beta}
					\log\frac{4}{\delta}
				}
				+
				\frac{4L_{x,0}\Delta_{y,0}}
				{\mu\eta_y}
				\\
				&+
				TL_{x,0}\sqrt{\Gamma_y}
				+
				\left(
				\frac14
				+
				\frac{\eta_xL_1\beta}{1-\beta}
				\right)
				\sum_{t=0}^{T-1}
				\|\nabla\Phi(x^t)\|
				\\
				&+
				\frac{T\eta_xL_0\beta}{1-\beta}
				+
				\frac{\beta}{1-\beta}
				\|m^0-\nabla\Phi(x^0)\|.
			\end{aligned}
		\end{equation}
		
		Substituting \eqref{RS_equ_HP_momentum_sum} into
		\eqref{RS_equ_HP_descent} yields
		\begin{equation*}
			\begin{aligned}
				C_0
				\frac{1}{T}
				\sum_{t=0}^{T-1}
				\|\nabla\Phi(x^t)\|
				\leq{}&
				\frac{\Phi(x^0)-\Phi^*}{T\eta_x}
				+
				\frac{8L_{x,0}\Delta_{y,0}}
				{T\mu\eta_y}
				+
				2L_{x,0}\sqrt{\Gamma_y}
				\\
				&+
				8\sigma_x
				\sqrt{
					(1-\beta)
					\log\frac{4}{\delta}
				}
				\\
				&+
				\frac{2\beta}
				{T(1-\beta)}
				\|m^0-\nabla\Phi(x^0)\|
				\\
				&+
				\frac{\eta_xL_0}{2}
				+
				\frac{2\eta_xL_0\beta}
				{1-\beta},
			\end{aligned}
		\end{equation*}
		where
		\[
		C_0
		:=
		1-\frac{\eta_xL_1}{2}
		-\frac12
		-\frac{2\eta_xL_1\beta}{1-\beta}.
		\]
		
		Since \(\eta_x \leq \frac{1-\beta}{8L_1}\), it follows that
		\(C_0 \geq \frac14\).
		Substituting the parameter choices of the theorem into the above bound, we obtain
		\begin{equation*}
			\begin{aligned}
				\frac{1}{T}
				\sum_{t=0}^{T-1}
				\|\nabla\Phi(x^t)\|
				\leq{}& 4(1+\kappa)L_{x,0}\left(\Phi(x^0)-\Phi^*\right)
				\left(
				\frac{\ell_T}{T}
				\right)^{1/4}\\
				&+32L_{x,0}\Delta_{y,0}\left(
				\frac{121\sigma_y^2(1+\kappa)^2L_{x,0}^2}
				{4\kappa^2\mu^2}
				\right)^{1/3}
				\left(
				\frac{\ell_T}{T}
				\right)^{1/2}\\
				&+\frac{8L_{x,0}\sigma_y}{\mu}
				\Bigg[
				\left(
				\frac{3751}{10}+\frac{22}{\log4}
				\right)
				\left(
				\frac{4\kappa^2\mu^2}
				{121\sigma_y^2(1+\kappa)^2L_{x,0}^2}
				\right)^{1/3}
				\Bigg]^{1/2}
				\left(
				\frac{\ell_T}{T}
				\right)^{1/4}\\
				&+\left(32\sigma_x+8\right)\left(
				\frac{\ell_T}{T}
				\right)^{1/4}+8\|m^0-\nabla\Phi(x^0)\|\left(
				\frac{\ell_T}{T}
				\right)^{1/2}+2\left(\frac{\ell_T}{T}\right)^{3/4}.
			\end{aligned}
		\end{equation*}
		Then there exists a constant \(C>0\), independent of \(T\) and
		\(\delta\), such that
		\[
		\frac1T
		\sum_{t=0}^{T-1}
		\|\nabla\Phi(x^t)\|
		\leq
		C
		\left(
		\frac{\ell_T}{T}
		\right)^{1/4}
		\]
		holds with probability at least \(1-\delta\).
		The proof is complete.
	\end{proof}
	
	Theorem~\ref{RS_thm_NSGDA-M_highProba} shows that NSGDA-M requires \(\mathcal O\left(\epsilon^{-4}\log(e/(\delta\epsilon))\right)\) iterations to achieve an \(\epsilon\)-stationary point of the primal function with probability at least \(1-\delta\), with
	\[
	\beta=1-\Theta\left(\epsilon^2/\log(e/(\delta\epsilon))\right),\quad
	\eta_y=\Theta\left(\epsilon^2/\log(e/(\delta\epsilon))\right),\quad
	\eta_x=\Theta\left(\epsilon^3/\log(e/(\delta\epsilon))\right).
	\]
	Compared with the high-probability complexity
	\(\mathcal O(\delta^{-4}\epsilon^{-4})\) established in
	Xian et al.~\cite{xian2024delving}, NSGDA-M improves the dependence on
	the failure probability \(\delta\) from polynomial to logarithmic with
	constant batch size.
	This improvement results from a trajectory-wise concentration analysis of
	the martingale-difference noise terms, rather than by converting
	localized expected stationarity bounds into probability guarantees via
	Markov's inequality.
	
	Similar to Xian et al.~\cite{xian2024delving}, our convergence analysis requires a warm-start condition on the initial
	point \(y^0\) to ensure that the dual tracking error remains within the
	localization region where the generalized smoothness condition holds.
	For a given \(x^0\), this condition amounts to finding an approximate
	solution of the initial inner maximization problem
	\[
	\max_{y\in\mathcal Y}\mathcal L(x^0,y),
	\]
	which can be obtained by applying existing first-order methods for
	generalized-smooth strongly convex optimization; see, e.g.,
	\cite{xian2024delving,chen2023generalized,li2023convex,fang2018spider}.

	\subsection{Convergence analysis of NSGDA-M in expectation}\label{RS_Subsection_3_2}
	This subsection studies the convergence of Algorithm~\ref{algorithm:NSGDA-M} in expectation. Unlike the preceding high-probability analysis, the expectation analysis needs to control the effect of the localization failure event to derive an unconditional expectation bound. 
	This requires the following bounded-gradient condition along the iterates.
	
	\begin{ass}
		\label{RS_ass_boundedGradientIterates}
		There exists a constant \(G_\Phi>0\) such that the iterates
		generated by Algorithm~\ref{algorithm:NSGDA-M} satisfy
		\[
		\|\nabla\Phi(x^t)\|
		\leq
		G_\Phi,
		\qquad
		t\geq0,
		\quad
		\text{a.s.}
		\]
	\end{ass}

	\begin{ass}
		\label{RS_ass_varianceBound}
		There exist constants \(\sigma_x,\sigma_y>0\) such that, for all
		\(t\geq0\),
		\[
		\begin{aligned}
			&\mathbb E
			\left[
			\|G_x(x^t,y^t,\xi^t)
			-
			\nabla_x\mathcal L(x^t,y^t)\|^2
			\mid
			\mathcal F_t
			\right]
			\leq
			\sigma_x^2,\\
			&\mathbb E
			\left[
			\|G_y(x^t,y^t,\xi^t)
			-
			\nabla_y\mathcal L(x^t,y^t)\|^2
			\mid
			\mathcal F_t
			\right]
			\leq
			\sigma_y^2 .
		\end{aligned}
		\]
	\end{ass}
	Assumption~\ref{RS_ass_boundedGradientIterates} is imposed solely to
	control the effect of the localization failure event when deriving
	the unconditional expectation bound. Assumption~\ref{RS_ass_varianceBound} is the standard bounded-variance condition for stochastic gradient estimators.

	The following theorem establishes an expected stationarity guarantee
	for NSGDA-M.
	
	\begin{theorem}
		\label{RS_thm_NSGDA-M_expectation}
		Let \(L_{x,0},L_{x,1},L_{y,0},L_{y,1}\) denote the generalized
		smoothness constants, \(\sigma_x,\sigma_y\) denote the noise parameters,
		and \(T\) denote the iteration horizon. Choose
		\[
		1-\beta
		=
		\frac{\mu\sqrt{\delta}}
		{40L_{x,1}\sigma_y\sqrt{T}},
		\qquad
		\eta_y
		=
		\frac{5(1-\beta)}{\mu},
		\qquad
		\eta_x
		=
		\frac{\sqrt{5(1-\beta)\delta}}
		{16\kappa L_{x,1}\sqrt{T}},
		\]
		where \(\delta\in(0,1)\) and $L_y$ and $\kappa$ are defined in
		\eqref{RS_equ:conditionNum_def}. Suppose that \(T\) is sufficiently
		large such that
		\[
		1-\beta\leq\frac13,
		\qquad
		\eta_y\leq\frac{\mu}{L_y^2},
		\qquad
		\eta_x
		\leq
		\frac{1-\beta}{8(1+\kappa)L_{x,1}},
		\]
		and the initial tracking error satisfies
		\(
		\|y^0-y^*(x^0)\|
		\leq
		\frac{\sqrt{\delta}}{16L_{x,1}},
		\)
		and Assumptions
		\ref{RS_ass_PhiLowerBound}--\ref{RR_ass:unbiased}
		and
		\ref{RS_ass_boundedGradientIterates}--\ref{RS_ass_varianceBound}
		hold. Then there exists a constant \(C>0\), independent of
		\(T\) and \(\delta\), such that
		\begin{equation}\label{RS_equ_NSGDA-M_expectation_bound}
			\frac1T
			\sum_{t=0}^{T-1}
			\mathbb E
			\left[
			\|\nabla\Phi(x^t)\|
			\right]
			\leq
			\frac{C}{\delta^{3/4}T^{1/4}}
			+
			G_\Phi\delta ,
		\end{equation}
		which further implies
		\[
		\frac1T
		\sum_{t=0}^{T-1}
		\mathbb E
		\left[
		\|\nabla\Phi(x^t)\|
		\right]
		\leq
		C T^{-1/7}.
		\]
	\end{theorem}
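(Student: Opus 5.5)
Before the proof: my plan mirrors the proof of Theorem~\ref{RS_thm_HP_NSGDAM}. The one change is that the high-probability localization event becomes an event whose complement has probability at most $\delta$, controlled in expectation. On that complement, Assumption~\ref{RS_ass_boundedGradientIterates} bounds the primal gradient norms, which produces the $G_\Phi\delta$ term.

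\textbf{Step 1: tracking error in expectation and the failure probability.} I would reuse the stopping time $\tau=\inf\{t:\|y^t-y^*(x^t)\|>1/(4L_{x,1})\}$ and the stopped recursion \eqref{RS_equ_HP_stopped_recursion}. Take conditional expectations and use Assumption~\ref{RR_ass:unbiased} to kill the cross term. Then use Assumption~\ref{RS_ass_varianceBound} together with $\eta_y\le\mu/L_y^2$. This gives
\[
\mathbb E[\widetilde V_{t+1}]\le(1-\mu\eta_y)\mathbb E[\widetilde V_t]+3\eta_y^2\sigma_y^2+\Bigl(1+\tfrac{2}{\mu\eta_y}\Bigr)\kappa^2\eta_x^2 ,
\]
and unrolling yields $\mathbb E[\widetilde V_t]\le\Delta_{y,0}^2+\Gamma$ with $\Gamma=O(\eta_y\sigma_y^2/\mu+\kappa^2\eta_x^2/(\mu\eta_y)^2)$.

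To bound $\mathbb P(\tau<T)$, I would avoid a union bound over $t$, which would lose a factor $T$. Instead I would use a supermartingale argument. The process $W_t:=\widetilde V_{t\wedge\tau}+(\text{remaining drift budget})$ is a nonnegative supermartingale, so Ville's maximal inequality applies. Alternatively, note that $\mathbb E[\widetilde V_{\tau\wedge T}]$ stays bounded by the same quantity, while on $\{\tau<T\}$ the stopped value exceeds $1/(16L_{x,1}^2)$. Either route gives $\mathbb P(\tau<T)\le 16L_{x,1}^2(\Delta_{y,0}^2+\Gamma)$. The parameter choices appear tailored so this is at most $\delta$. With $\Delta_{y,0}^2\le\delta/(256L_{x,1}^2)$, $\eta_y=5(1-\beta)/\mu$, and $1-\beta\propto\sqrt\delta/\sqrt T$, one gets $\eta_y\sigma_y^2/\mu=O(\delta/(L_{x,1}^2\sqrt T))$ and similarly for the $\eta_x$ term. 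Checking that these constants give $\mathbb P(\tau<T)\le\delta$ is the main bookkeeping obstacle. The subtle point is that I need the bound for the running maximum, not only pointwise, and the $\kappa_t$ drift term is deterministic, so the supermartingale construction should handle it.

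\textbf{Step 2: descent on the good event.} Split
\[
\tfrac1T\sum_t\mathbb E\|\nabla\Phi(x^t)\|=\tfrac1T\sum_t\mathbb E[\|\nabla\Phi(x^t)\|\mathbf 1_{\{\tau\ge T\}}]+\tfrac1T\sum_t\mathbb E[\|\nabla\Phi(x^t)\|\mathbf 1_{\{\tau<T\}}].
\]
The second sum is at most $G_\Phi\delta$ by Step 1 and Assumption~\ref{RS_ass_boundedGradientIterates}. For the first, I would prefer to work with stopped quantities instead of conditioning, which would destroy the martingale structure. Concretely, I would rerun the deterministic descent inequality \eqref{RS_equ_HP_descent} and the error bound \eqref{RS_equ_HP_momentum_error_bound} on the whole path. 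The $I_2$ contributions are bounded using the indicator $\mathbf 1_{\{i<\tau\}}$; wherever the indicator fails, Assumption~\ref{RS_ass_boundedGradientIterates} again bounds the resulting terms. The $I_4$ term is absorbed as before with factor $1/4$. $I_3$ is bounded by $TL_{x,0}\,\mathbb E[\widetilde V_i]^{1/2}$ via Jensen's inequality. $I_1$ is bounded in expectation via the martingale second moment $\mathbb E\|\sum_{i\le t}\beta^{t-i}\Delta_x^i\|\le\sigma_x/\sqrt{1-\beta^2}$, giving $O(T\sigma_x\sqrt{1-\beta})$.

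\textbf{Step 3: plug in the parameters.} The right-hand side contains $(\Phi(x^0)-\Phi^*)/(T\eta_x)$, $\eta_xL_0/(1-\beta)$, $\sigma_x\sqrt{1-\beta}$, and $L_{x,0}\sqrt\Gamma$. The condition $\eta_x\le(1-\beta)/(8L_1)$ keeps $C_0\ge1/4$. With $1-\beta\sim\delta^{1/2}T^{-1/2}$ and $\eta_x\sim\delta^{3/4}T^{-3/4}$, the dominant term is $1/(T\eta_x)\sim\delta^{-3/4}T^{-1/4}$, and the others are smaller. This gives \eqref{RS_equ_NSGDA-M_expectation_bound}. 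Finally, choosing $\delta=T^{-1/7}$ balances $\delta^{-3/4}T^{-1/4}=T^{3/28-7/28}=T^{-1/7}$ against $G_\Phi\delta=G_\Phi T^{-1/7}$, which yields the $CT^{-1/7}$ rate after enlarging $C$.
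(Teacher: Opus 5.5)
Your overall architecture matches the paper's: the stopping time $\tau$, splitting off $\{\tau<T\}$ and paying $G_\Phi\delta$ via Assumption~\ref{RS_ass_boundedGradientIterates}, a localized descent argument with the same $I_1$--$I_4$ bounds, and $\delta=T^{-1/7}$. However, Step~1 rests on a false inequality. You claim $\mathbb P(\tau<T)\le 16L_{x,1}^2(\Delta_{y,0}^2+\Gamma)$, where $\Gamma\approx C_1/(1-\gamma)$ is the \emph{stationary} level, $C_1$ is the one-step drift and $1-\gamma=\Theta(\mu\eta_y)$ is the contraction. The pointwise bound $\mathbb E[\widetilde V_t]\le\Delta_{y,0}^2+\Gamma$ does not transfer to the stopped value $\widetilde V_{\tau\wedge T}$. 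Once the process exits it is frozen and no longer contracts, so exit mass accumulates over the horizon. For a counterexample, write $e_t:=y^t-y^*(x^t)$ and suppose that, while below the threshold, $\|e_{t+1}\|^2$ equals some $R$ comparable to but above $1/(16L_{x,1}^2)$ with probability $p$, and $0$ otherwise. Then $\mathbb E[\|e_{t+1}\|^2\mid\mathcal F_t]\le\gamma\|e_t\|^2+C_1$ holds on $\{t<\tau\}$ with $C_1=pR$ for every $\gamma$. Yet $\mathbb P(\tau<T)\approx Tp$, which exceeds your bound by a factor of order $T(1-\gamma)$. In the theorem's regime, $T(1-\gamma)\asymp(1-\beta)T\to\infty$. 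For the same reason, the nonnegative supermartingale in your Ville route must carry the remaining budget $(T-t)C_1$, not $\Gamma$.

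The correct statement, which the paper proves by induction, is $\mathbb E\|e_{t\wedge\tau}\|^2\le\|e_0\|^2+tC_1$ with $C_1=(1+\frac{2}{\mu\eta_y})\kappa^2\eta_x^2+(1+\frac{\mu\eta_y}{2})\eta_y^2\sigma_y^2$. Markov's inequality then gives $\mathbb P(\tau<T)\le16L_{x,1}^2(\|e_0\|^2+TC_1)$. The parameters are tailored to this \emph{linear} accumulation: $16L_{x,1}^2T\eta_y^2\sigma_y^2=\delta/4$ and $16L_{x,1}^2T(1+\frac{2}{\mu\eta_y})\kappa^2\eta_x^2=(2+5(1-\beta))\delta/16$, both independent of $T$. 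Together with $16L_{x,1}^2\|e_0\|^2\le\delta/16$ and $\mu\eta_y=5(1-\beta)\le 5/3$, the total is at most $3\delta/4$, with no extra largeness condition. Your estimate $\eta_y\sigma_y^2/\mu=O(\delta/(L_{x,1}^2\sqrt T))$ is also off: the quantity equals $\sqrt\delta\,\sigma_y/(8\mu L_{x,1}\sqrt T)$. So even your $\Gamma$-route would need $T\gtrsim L_{x,1}^2\sigma_y^2/(\mu^2\delta)$, which is not among the hypotheses.

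A smaller issue is in Step~2. Invoking Assumption~\ref{RS_ass_boundedGradientIterates} ``wherever the indicator fails'' does not work. For $i\ge\tau$, the term $\|\nabla_x\mathcal L(x^i,y^i)-\nabla\Phi(x^i)\|$ is controlled neither by $G_\Phi$ nor by Assumption~\ref{RS_ass_L0L1Smooth}. Instead, multiply the one-step descent inequality by $\mathbf 1_{\{t<\tau\}}$. These indicators are nonincreasing in $t$, so the sum telescopes to $\Phi(x^{\tau\wedge T})-\Phi(x^0)\ge\Phi^*-\Phi(x^0)$. Every index $i\le t$ in the momentum-error bound then satisfies $i<\tau$, and the $\{\tau\ge T\}$ part is dominated by this localized sum. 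Finally, \eqref{RS_equ_HP_stopped_recursion} was derived under $\eta_y\le\mu/(4L_y^2)$; under the present $\eta_y\le\mu/L_y^2$ the contraction becomes $1-\mu\eta_y/2$, which is a harmless constant change.
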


	\begin{proof}
		Recall the stopping time
		\[
		\tau
		:=
		\inf
		\left\{
		t\geq0:
		\|y^t-y^{*}(x^t)\|
		>
		\frac{1}{4L_{x,1}}
		\right\},\quad\quad \inf\emptyset=+\infty.
		\]
		Then, 
		\begin{equation}\label{RS_equ_stopped_expectation_split}
			\begin{aligned}
				\frac1T
				\sum_{t=0}^{T-1}
				\mathbb E\|\nabla\Phi(x^t)\|
				={}&
				\frac1T
				\sum_{t=0}^{T-1}
				\mathbb E
				\left[
				\|\nabla\Phi(x^t)\|
				\mathbf 1_{\{\tau<T\}}
				\right]
				+
				\frac1T
				\sum_{t=0}^{T-1}
				\mathbb E
				\left[
				\|\nabla\Phi(x^t)\|
				\mathbf 1_{\{\tau\geq T\}}
				\right]\\
				\leq{}& \frac1T
				\sum_{t=0}^{T-1}
				\mathbb E
				\left[
				\|\nabla\Phi(x^t)\|
				\mathbf 1_{\{\tau<T\}}\right]
				+\frac1T
				\sum_{t=0}^{T-1}
				\mathbb E
				\left[
				\|\nabla\Phi(x^t)\|
				\mathbf 1_{\{t< \tau\}}
				\right],
			\end{aligned}
		\end{equation}
		where the last inequality follows from
		\(\{\tau\geq T\}\subseteq\{t<\tau\}\) for every
		\(t=0,\ldots,T-1\).
		
		In what follows, we first bound the first term on the right-hand side of \eqref{RS_equ_stopped_expectation_split} by Markov's inequality.
		For notational simplicity, we define \(e_t:=y^t-y^*(x^t)\), then
		\[
		\mathbb P(\tau<T)
		=
		\mathbb P
		\left(
		\sum_{j=0}^{T-1}
		\|e_j\|^2
		\mathbf 1_{\{\tau=j\}}
		>
		\frac{1}{16L_{x,1}^2}
		\right).
		\]	
		To this end, we show the following inequality holds by induction
		\begin{equation}\label{eq:stopped_moment}
			\begin{aligned}
				\mathbb E\|e_{t\wedge\tau}\|^2
				={}&
				\mathbb E
				\left[
				\|e_t\|^2\mathbf 1_{\{t\leq\tau\}}
				\right]
				+
				\sum_{j=0}^{t-1}
				\mathbb E
				\left[
				\|e_j\|^2\mathbf 1_{\{\tau=j\}}
				\right]\\                                             
				\leq{}&
				\|e_0\|^2+tC_1, \quad t=0,1,\ldots,T.
			\end{aligned}
		\end{equation}
		
		Obviously, \eqref{eq:stopped_moment} holds for \(t=0\).
		Assume that \eqref{eq:stopped_moment} holds for some
		\(t\in\{0,\ldots,T-1\}\).
		By the definition of \(e_{t\wedge\tau}\),
		\begin{equation}\label{RS_equ_stopped_moment_induction_step}
			\begin{aligned}
				\mathbb E\|e_{(t+1)\wedge\tau}\|^2={}&
				\mathbb E
				\left[
				\|e_{t+1}\|^2\mathbf 1_{\{t+1\leq\tau\}}
				\right]
				+
				\sum_{j=0}^{t}
				\mathbb E
				\left[
				\|e_j\|^2\mathbf 1_{\{\tau=j\}}
				\right]                                                \\
				={}&
				\mathbb E
				\left[
				\mathbf 1_{\{t<\tau\}}
				\mathbb E
				\left[
				\|e_{t+1}\|^2
				\,\middle|\,
				\mathcal F_t
				\right]
				\right]
				+
				\sum_{j=0}^{t}
				\mathbb E
				\left[
				\|e_j\|^2\mathbf 1_{\{\tau=j\}}
				\right].
			\end{aligned}
		\end{equation}
		
		By Young's inequality, for any \(\theta>0\),
		\begin{equation}\label{RS_equ:sto_Young}
			\begin{aligned}
				&\mathbb E
				\left[
				\|e_{t+1}\|^2
				\,\middle|\,
				\mathcal F_t
				\right]\\
				\leq&
				\left(
				1+\frac{1}{\theta}
				\right)
				\mathbb E
				\left[
				\|y^*(x^{t+1})-y^*(x^t)\|^2
				\,\middle|\,
				\mathcal F_t
				\right]
				+
				(1+\theta)
				\mathbb E
				\left[
				\|y^{t+1}-y^*(x^t)\|^2
				\,\middle|\,
				\mathcal F_t
				\right]\\
				\leq&\left(1+\frac{1}{\theta}\right)\kappa^2\eta_x^2+(1+\theta)
				\mathbb E\left[\|y^{t+1}-y^*(x^t)\|^2\,\middle|\,\mathcal F_t\right],
			\end{aligned}
		\end{equation}
		where the second inequality follows from Lemma~\ref{RS_lem_LipSolution}, the update rule of \(x^t\) and \(\eta_x\leq \frac{1}{L_{x,1}}\).

		For the last term of \eqref{RS_equ:sto_Young}, we have by the update rule for \(y\) and the optimality of \(y^*(x^t)\) that
		\begin{equation*}
			\begin{aligned}
				&\mathbb{E}\left[\|y^{t+1}-y^*(x^t)\|^2\,\middle|\,\mathcal F_t\right] \\                                      
				={}&
				\mathbb{E}\left[\left\|
				\operatorname{proj}_{\mathcal Y}
				\left(
				y^t+\eta_yG_y(x^t,y^t,\xi^t)
				\right)
				-
				\operatorname{proj}_{\mathcal Y}
				\left(
				y^*(x^t)
				+
				\eta_y\nabla_y\mathcal L(x^t,y^*(x^t))
				\right)
				\right\|^2\,\middle|\,\mathcal F_t\right]                                             \\
				\leq{}&
				\mathbb{E}\left[\left\|
				y^t-y^*(x^t)
				+
				\eta_y
				\left(
				G_y(x^t,y^t,\xi^t)
				-
				\nabla_y\mathcal L(x^t,y^*(x^t))
				\right)
				\right\|^2\,\middle|\,\mathcal F_t\right]                                            \\
				\leq{}&
				\|e_t\|^2                                    
				+
				2\eta_y
				\left\langle
				\nabla_y\mathcal L(x^t,y^t)
				-
				\nabla_y\mathcal L(x^t,y^*(x^t)),
				y^t-y^*(x^t)
				\right\rangle                                         \\
				&+
				\eta_y^2\sigma_y^2                                    
				+
				\eta_y^2
				\left\|
				\nabla_y\mathcal L(x^t,y^t)
				-
				\nabla_y\mathcal L(x^t,y^*(x^t))
				\right\|^2\\
				\leq{}&
				\left(
				1-2\mu\eta_y+\eta_y^2L_y^2
				\right)
				\|e_t\|^2
				+
				\eta_y^2\sigma_y^2\\
				\leq{}&
				(1-\mu\eta_y)\|e_t\|^2
				+
				\eta_y^2\sigma_y^2,
			\end{aligned}
		\end{equation*}
		where the first inequality follows from the nonexpansiveness of the projection operator, the second inequality follows from  Assumptions~\ref{RR_ass:unbiased} and \ref{RS_ass_varianceBound},  the third inequality follows from the \(\mu\)-strong concavity of
		\(\mathcal L(x^t,\cdot)\) and Assumptions~\ref{RS_ass_L0L1Smooth}
		and~\ref{RS_ass_gradyStarBound}, and the last inequality follows from \(\eta_y\leq\mu/L_y^2\).
		
		Then, by setting \(\theta=\frac{\mu\eta_y}{2}\) in \eqref{RS_equ:sto_Young}, we have, for \(t<\tau\),
		\begin{equation}\label{eq:onestep_stopped}
			\mathbb E
			\left[
			\|e_{t+1}\|^2
			\,\middle|\,
			\mathcal F_t
			\right]
			\leq
			\gamma\|e_t\|^2+C_1,
		\end{equation}
		where
		\[
		\gamma:=1-\frac{\mu\eta_y}{2},
		\qquad
		C_1
		:=
		\left(
		1+\frac{2}{\mu\eta_y}
		\right)
		\kappa^2\eta_x^2
		+
		\left(
		1+\frac{\mu\eta_y}{2}
		\right)
		\eta_y^2\sigma_y^2 .
		\]
		
		Substituting \eqref{eq:onestep_stopped} into \eqref{RS_equ_stopped_moment_induction_step} yields
		\[
		\begin{aligned}
			\mathbb E\|e_{(t+1)\wedge\tau}\|^2={}&
			\mathbb E
			\left[
			\|e_{t+1}\|^2\mathbf 1_{\{t+1\leq\tau\}}
			\right]
			+
			\sum_{j=0}^{t}
			\mathbb E
			\left[
			\|e_j\|^2\mathbf 1_{\{\tau=j\}}
			\right]                                                \\
			\leq{}&
			\gamma
			\mathbb E
			\left[
			\|e_t\|^2\mathbf 1_{\{t<\tau\}}
			\right]
			+
			C_1\mathbb P(t<\tau)
			+
			\sum_{j=0}^{t}
			\mathbb E
			\left[
			\|e_j\|^2\mathbf 1_{\{\tau=j\}}
			\right]\\
			\leq{}&
			\mathbb E
			\left[
			\|e_t\|^2\mathbf 1_{\{t\leq\tau\}}
			\right]
			+
			\sum_{j=0}^{t-1}
			\mathbb E
			\left[
			\|e_j\|^2\mathbf 1_{\{\tau=j\}}
			\right]
			+
			C_1 \\
			\leq{}&
			\|e_0\|^2+(t+1)C_1,
		\end{aligned}
		\]
		where the second inequality follows from \(\gamma\in(0,1)\) and the disjoint decomposition 
		\(
		\{t\leq\tau\}
		=
		\{t<\tau\}\cup\{\tau=t\},
		\)
		and the last inequality follows from the induction hypothesis. Thus \eqref{eq:stopped_moment} holds.	
		
		Taking \(t=T\) in \eqref{eq:stopped_moment} gives
		\begin{equation}\label{eq:stopped_moment_T}
			\mathbb E
			\left[
			\|e_T\|^2\mathbf 1_{\{T\leq\tau\}}
			\right]
			+
			\sum_{j=0}^{T-1}
			\mathbb E
			\left[
			\|e_j\|^2\mathbf 1_{\{\tau=j\}}
			\right]
			\leq
			\|e_0\|^2+TC_1 .
		\end{equation}

		When the event \(\{\tau<T\}\) holds, we have by the definition of
		\(\tau\) that
		\[
		\sum_{j=0}^{T-1}
		\|e_j\|^2\mathbf 1_{\{\tau=j\}}
		=
		\|e_\tau\|^2
		>
		\frac{1}{16L_{x,1}^2}.
		\]
		Therefore, by Markov's inequality and
		\eqref{eq:stopped_moment_T},
		\begin{equation}\label{eq:prob_bound}
			\begin{aligned}
				\mathbb P(\tau<T)
				&=\mathbb P
				\left(
				\sum_{j=0}^{T-1}
				\|e_j\|^2\mathbf 1_{\{\tau=j\}}>
				\frac{1}{16L_{x,1}^2}
				\right)                                             \\
				&\leq
				16L_{x,1}^2
				\left(
				\|e_0\|^2+TC_1
				\right)\\
				&\leq \frac{\delta}{16}+\frac{11\delta}{16}\leq
				\delta,
			\end{aligned}
		\end{equation}
		where the second inequality follows from
		\(\|e_0\|\leq\frac{\sqrt{\delta}}{16L_{x,1}}\), the choice of \(\eta_x\) and \(\eta_y\) in the theorem, and \(1-\beta\leq\frac{1}{3}\).
		Hence, by Assumption~\ref{RS_ass_boundedGradientIterates},
		\begin{equation}\label{RS_equ_stopped_localization_failure}
			\begin{aligned}
				\frac1T
				\sum_{t=0}^{T-1}
				\mathbb E
				\left[
				\|\nabla\Phi(x^t)\|
				\mathbf 1_{\{\tau<T\}}
				\right]
				&\leq
				G_\Phi\mathbb P(\tau<T)
				\\
				&\leq
				G_\Phi\delta.
			\end{aligned}
		\end{equation}

		Next, we bound the second term on the right-hand side of \eqref{RS_equ_stopped_expectation_split}.
		Since \(\Phi(\cdot)\) is \((L_0,L_1)\)-smooth with \(L_0:=(1+\kappa)L_{x,0}\) and \(L_1:=(1+\kappa)L_{x,1}\), we have by the update rule of \(x\) in \eqref{RS_equ:alg_x} and \(\eta_x\leq \frac{1}{L_1}\) that
		\[
		\begin{aligned}
			\Phi(x^{t+1})
			\leq{}&
			\Phi(x^t)
			+
			\left\langle
			\nabla\Phi(x^t),
			x^{t+1}-x^t
			\right\rangle
			+
			\frac{L_0+L_1\|\nabla\Phi(x^t)\|}{2}
			\|x^{t+1}-x^t\|^2
			\\
			\leq{}&
			\Phi(x^t)
			-
			\eta_x
			\left\langle
			m^{t+1}-\varepsilon_t,
			\frac{m^{t+1}}{\|m^{t+1}\|}
			\right\rangle
			+
			\frac{\eta_x^2}{2}
			\left(
			L_0+L_1\|\nabla\Phi(x^t)\|
			\right)
			\\
			\leq{}&
			\Phi(x^t)
			-
			\eta_x\|m^{t+1}\|
			+
			\eta_x\|\varepsilon_t\|
			+
			\frac{\eta_x^2}{2}
			\left(
			L_0+L_1\|\nabla\Phi(x^t)\|
			\right)
			\\
			\leq{}&
			\Phi(x^t)
			-
			\eta_x\|\nabla\Phi(x^t)\|
			+
			2\eta_x\|\varepsilon_t\|
			+
			\frac{\eta_x^2}{2}
			\left(
			L_0+L_1\|\nabla\Phi(x^t)\|
			\right),
		\end{aligned}
		\]
		where \(\varepsilon_t:=m^{t+1}-\nabla\Phi(x^t)\), the second inequality follows from
		Cauchy--Schwarz inequality, and the last inequality follows from the fact
		\[
		\|m^{t+1}\|
		=
		\|\nabla\Phi(x^t)+\varepsilon_t\|
		\geq
		\|\nabla\Phi(x^t)\|
		-
		\|\varepsilon_t\|.
		\]
		Multiplying the above inequality by
		\(\mathbf{1}_{\{t<\tau\}}\), summing over
		\(t=0,\ldots,T-1\), and taking expectations gives
		\begin{equation}\label{eq:stopped_descent}
			\begin{aligned}
				&
				\left(
				1-\frac{\eta_xL_1}{2}
				\right)
				\frac1T
				\sum_{t=0}^{T-1}
				\mathbb{E}
				\left[
				\|\nabla\Phi(x^t)\|
				\mathbf{1}_{\{t<\tau\}}
				\right]                                            \\
				\leq{}&
				\frac2T
				\sum_{t=0}^{T-1}
				\mathbb{E}
				\left[
				\|\varepsilon_t\|
				\mathbf{1}_{\{t<\tau\}}
				\right]
				+
				\frac{\Phi(x^0)-\Phi^*}{T\eta_x}
				+
				\frac{\eta_xL_0}{2}.
			\end{aligned}
		\end{equation}

		For the first term on the right-hand side of \eqref{eq:stopped_descent}, 
		we have by \eqref{eq:momentum_error_bound} in Theorem~\ref{RS_thm_NSGDA-M_highProba} that
		\[
		\begin{aligned}
			\|\varepsilon_t\|
			\leq{}&
			\beta^{t+1}
			\|m^0-\nabla\Phi(x^0)\|                              
			+
			\eta_x\beta
			\sum_{i=0}^{t-1}
			\beta^{t-i-1}
			\left(
			L_0+L_1\|\nabla\Phi(x^i)\|
			\right)                                                \\
			&+
			(1-\beta)
			\left\|
			\sum_{i=0}^{t}
			\beta^{t-i}
			\widehat{\varepsilon}_i
			\right\|,
		\end{aligned}
		\]
		where 
		\(
		\widehat{\varepsilon}_i
		\)
		is defined in \eqref{eq:gradient_estimator_error}.	
		Then,
		\[
		\begin{aligned}
			\sum_{t=0}^{T-1}
			\mathbb E
			\left[
			\|\varepsilon_t\|\mathbf 1_{\{t<\tau\}}
			\right]                                                
			\leq{}&
			(1-\beta)
			\sum_{t=0}^{T-1}
			\mathbb E
			\left[
			\left\|
			\sum_{i=0}^{t}
			\beta^{t-i}\widehat{\varepsilon}_i
			\right\|
			\mathbf 1_{\{t<\tau\}}
			\right]  \\                                              
			&+
			\frac{\eta_xL_1\beta}{1-\beta}
			\sum_{t=0}^{T-1}
			\mathbb E
			\left[
			\|\nabla\Phi(x^t)\|
			\mathbf 1_{\{t<\tau\}}
			\right]   \\                                             
			&+
			\frac{T\eta_xL_0\beta}{1-\beta}
			+
			\frac{\beta}{1-\beta}
			\|m^0-\nabla\Phi(x^0)\|.
		\end{aligned}
		\]
		Denote
		\(
		\Delta_x^i
		:=
		G_x(x^i,y^i,\xi^i)
		-
		\nabla_x\mathcal L(x^i,y^i).
		\)
		We have by the triangle inequality and Assumption \ref{RS_ass_L0L1Smooth} that
		\begin{equation}\label{RS_equ_HP_I1tau_decomposition}
			\begin{aligned}
				\sum_{t=0}^{T-1}
				\mathbb E
				\left[
				\|\varepsilon_t\|\mathbf 1_{\{t<\tau\}}
				\right]                                                                     
				\leq{}&
				\underbrace{
					(1-\beta)
					\sum_{t=0}^{T-1}
					\mathbb E
					\left[
					\left\|
					\sum_{i=0}^{t}
					\beta^{t-i}
					\Delta_x^i
					\right\|
					\mathbf 1_{\{t<\tau\}}
					\right]}_{=:I_{1,\tau}}+
				\frac{T\eta_xL_0\beta}{1-\beta}                                \\
				&+
				\underbrace{
					(1-\beta)L_{x,0}
					\sum_{t=0}^{T-1}\sum_{i=0}^{t}
					\beta^{t-i}
					\mathbb E
					\left[
					\|e_i\|\mathbf 1_{\{i<\tau\}}
					\right]}_{=:I_{2,\tau}}                                    \\
				&+
				\underbrace{
					(1-\beta)L_{x,1}
					\sum_{t=0}^{T-1}\sum_{i=0}^{t}
					\beta^{t-i}
					\mathbb E
					\left[
					\|\nabla\Phi(x^i)\|\|e_i\|
					\mathbf 1_{\{i<\tau\}}
					\right]}_{=:I_{3,\tau}}  \\
				&+
				\frac{\eta_xL_1\beta}{1-\beta}
				\sum_{t=0}^{T-1}
				\mathbb E
				\left[
				\|\nabla\Phi(x^t)\|
				\mathbf 1_{\{t<\tau\}}
				\right]                                                
				+
				\frac{\beta}{1-\beta}
				\|m^0-\nabla\Phi(x^0)\|.
			\end{aligned}
		\end{equation}

		For \(I_{1,\tau}\), 
		\begin{equation}\label{RS_equ_HP_I1tau_step1}
			\begin{aligned}
				I_{1,\tau}
				&\leq
				(1-\beta)
				\sum_{t=0}^{T-1}
				\left(
				\mathbb E
				\left\|
				\sum_{i=0}^{t}
				\beta^{t-i}
				\Delta_x^i
				\right\|^2
				\right)^{1/2} \left(\mathbb E\left[\mathbf 1_{\{t<\tau\}}\right]\right)^{1/2}                                            \\
				&\leq
				(1-\beta)
				\sum_{t=0}^{T-1}
				\left(
				\sigma_x^2
				\sum_{i=0}^{t}
				\beta^{2(t-i)}
				\right)^{1/2}                                             \\
				&\leq
				(1-\beta)\sigma_x\sqrt{T}
				\left(
				\sum_{t=0}^{T-1}
				\sum_{i=0}^{t}
				\beta^{2(t-i)}
				\right)^{1/2}                                             \\
				&\leq
				T\sigma_x\sqrt{1-\beta},
			\end{aligned}
		\end{equation}
		where the first inequality follows from the
		Cauchy--Schwarz inequality, the second inequality
		follows from Assumptions~\ref{RR_ass:unbiased} and~\ref{RS_ass_varianceBound}, and the third inequality
		follows from \(\sum_{i=1}^n \sqrt{a_i} \leq \sqrt{n} \sqrt{\sum_{i=1}^n a_i}.\)

		To bound \(I_{2,\tau}\), we show by induction that  the following inequality holds
		\begin{equation}\label{eq:stopped_geometric_moment}
			\mathbb E
			\left[
			\|e_i\|^2\mathbf 1_{\{i\leq\tau\}}
			\right]
			\leq
			\gamma^i\|e_0\|^2
			+
			\frac{C_1}{1-\gamma},\quad i=0,\ldots,T.
		\end{equation}
		
		Obviously, \eqref{eq:stopped_geometric_moment} holds for \(i=0\).
		Assume that \eqref{eq:stopped_geometric_moment} holds for some \(i\in\{0,\ldots,T-1\}\).
		
		Since
		\[
		\{i+1\leq\tau\}
		=
		\{i<\tau\}
		\in\mathcal F_i,
		\]
		\eqref{eq:onestep_stopped} gives
		\[
		\begin{aligned}
			&
			\mathbb E
			\left[
			\|e_{i+1}\|^2\mathbf 1_{\{i+1\leq\tau\}}
			\right]                                                    \\
			={}&
			\mathbb E
			\left[
			\mathbf 1_{\{i<\tau\}}
			\mathbb E
			\left[
			\|e_{i+1}\|^2
			\,\middle|\,
			\mathcal F_i
			\right]
			\right]                                                    \\
			\leq{}&
			\gamma
			\mathbb E
			\left[
			\|e_i\|^2\mathbf 1_{\{i<\tau\}}
			\right]
			+
			C_1\mathbb P(i<\tau)                                      \\
			\leq{}&
			\gamma
			\mathbb E
			\left[
			\|e_i\|^2\mathbf 1_{\{i\leq\tau\}}
			\right]
			+
			C_1                                                        \\
			\leq{}&
			\gamma^{i+1}\|e_0\|^2
			+
			\frac{C_1}{1-\gamma},
		\end{aligned}
		\]
		where the last inequality follows from the induction hypothesis. Thus, \eqref{eq:stopped_geometric_moment} holds.	
		
		Then, we have by Jensen's inequality and \eqref{eq:stopped_geometric_moment} that
		\[
		\begin{aligned}
			\mathbb E
			\left[
			\|e_i\|\mathbf 1_{\{i<\tau\}}
			\right]
			&\leq
			\left(
			\mathbb E
			\left[
			\|e_i\|^2\mathbf 1_{\{i\leq\tau\}}
			\right]
			\right)^{1/2}                                             \\
			&\leq
			\gamma^{i/2}\|e_0\|
			+
			\left(\frac{C_1}{1-\gamma}\right)^{1/2}.
		\end{aligned}
		\]
		Therefore,
		\begin{equation}\label{RS_equ_HP_I1tau_step2}
			\begin{aligned}
				I_{2,\tau}
				&\leq
				(1-\beta)L_{x,0}
				\sum_{t=0}^{T-1}\sum_{i=0}^{t}
				\beta^{t-i}
				\left(
				\gamma^{i/2}\|e_0\|
				+
				\left(\frac{C_1}{1-\gamma}\right)^{1/2}
				\right)                                                    \\
				&\leq
				L_{x,0}\|e_0\|
				\sum_{i=0}^{T-1}\gamma^{i/2}
				+
				TL_{x,0}\left(\frac{C_1}{1-\gamma}\right)^{1/2}                                        \\
				&\leq
				\frac{4L_{x,0}\|e_0\|}{\mu\eta_y}
				+
				TL_{x,0}\left(\frac{C_1}{1-\gamma}\right)^{1/2}.
			\end{aligned}
		\end{equation}

		For \(I_{3,\tau}\),
		\begin{equation}\label{RS_equ_HP_I1tau_step3}
			\begin{aligned}
				I_{3,\tau}
				&\leq
				\frac14
				\sum_{i=0}^{T-1}
				\left(
				(1-\beta)
				\sum_{t=i}^{T-1}\beta^{t-i}
				\right)
				\mathbb E
				\left[
				\|\nabla\Phi(x^i)\|
				\mathbf 1_{\{i<\tau\}}
				\right]                                                    \\
				&\leq
				\frac14
				\sum_{t=0}^{T-1}
				\mathbb E
				\left[
				\|\nabla\Phi(x^t)\|
				\mathbf 1_{\{t<\tau\}}
				\right],
			\end{aligned}
		\end{equation}
		where the first inequality follows from
		\(\|e_i\|\leq 1/(4L_{x,1})\) if \(i<\tau\).

		Substituting \eqref{RS_equ_HP_I1tau_step1}, \eqref{RS_equ_HP_I1tau_step2} and \eqref{RS_equ_HP_I1tau_step3} into \eqref{RS_equ_HP_I1tau_decomposition} yields
		\[
		\begin{aligned}
			&
			\sum_{t=0}^{T-1}
			\mathbb E
			\left[
			\|\varepsilon_t\|\mathbf 1_{\{t<\tau\}}
			\right]                                                    \\
			\leq{}&
			T\sigma_x\sqrt{1-\beta}
			+\frac{4L_{x,0}\|e_0\|}{\mu\eta_y}
			+TL_{x,0}\left(\frac{C_1}{1-\gamma}\right)^{1/2}                                      \\
			&+
			\left(
			\frac14+\frac{\eta_xL_1\beta}{1-\beta}
			\right)
			\sum_{t=0}^{T-1}
			\mathbb E
			\left[
			\|\nabla\Phi(x^t)\|\mathbf 1_{\{t<\tau\}}
			\right]                                                    \\
			&+
			\frac{T\eta_xL_0\beta}{1-\beta}
			+\frac{\beta}{1-\beta}
			\|m^0-\nabla\Phi(x^0)\|.
		\end{aligned}
		\]
		Subsequently, \eqref{eq:stopped_descent} gives
		\[
		\begin{aligned}
			&
			C_0
			\frac1T
			\sum_{t=0}^{T-1}
			\mathbb E
			\left[
			\|\nabla\Phi(x^t)\|\mathbf 1_{\{t<\tau\}}
			\right]                                                    \\
			\leq{}&
			\frac{\Phi(x^0)-\Phi^*}{T\eta_x}
			+\frac{8L_{x,0}\|e_0\|}{T\mu\eta_y}
			+\frac{2\beta}{T(1-\beta)}
			\|m^0-\nabla\Phi(x^0)\|
			\\
			&+2L_{x,0}\left(\frac{C_1}{1-\gamma}\right)^{1/2}+2\sigma_x\sqrt{1-\beta}
			+\frac{\eta_xL_0}{2}
			+\frac{2\eta_xL_0\beta}{1-\beta},
		\end{aligned}
		\]
		where
		\[
		C_0
		:=
		\frac{1}{2}-\frac{\eta_xL_1}{2}
		-\frac{2\eta_xL_1\beta}{1-\beta}.
		\]
		Since \(\eta_x \leq \frac{1-\beta}{8L_1}\), it follows that
		\(C_0 \geq \frac14\).
		Substituting the parameter choices of the theorem into the preceding bound, we obtain
		\begin{equation*}
			\begin{aligned}
				&\frac{1}{4T}
				\sum_{t=0}^{T-1}
				\mathbb E
				\left[
				\|\nabla\Phi(x^t)\|\mathbf 1_{\{t<\tau\}}
				\right] \\                                                
				\leq{}&16\left(\Phi(x^0)-\Phi^*\right)\kappa L_{x,1}
				\sqrt{\frac{8L_{x,1}\sigma_y}{\mu}}
				\frac{1}{\delta^{3/4}T^{1/4}}+\frac{4L_{x,0}\sigma_y}
				{\mu\sqrt{T}}
				\\
				&+\frac{80L_{x,1}\sigma_y}{\mu}
				\frac{\|m^0-\nabla\Phi(x^0)\|}
				{\sqrt{\delta T}}+2L_{x,0}\left(\frac{\sqrt{3}}{8\sqrt{2}L_{x,1}}
				\frac{\sqrt{\delta}}{\sqrt{T}}
				+
				\sqrt{
					\frac{3\sigma_y}{8\mu L_{x,1}}
				}
				\frac{\delta^{1/4}}{T^{1/4}}\right)
				\\
				&
				+2\sigma_x
				\sqrt{
					\frac{\mu}{40L_{x,1}\sigma_y}
				}\frac{\delta^{1/4}}{T^{1/4}}
				+\frac{L_0}{32\kappa L_{x,1}}
				\sqrt{
					\frac{\mu}{8L_{x,1}\sigma_y}
				}
				\frac{\delta^{3/4}}{T^{3/4}}+\frac{5L_0\sigma_y}{\kappa\mu}
				\sqrt{
					\frac{\mu}{8L_{x,1}\sigma_y}
				}
				\frac{\delta^{1/4}}{T^{1/4}}.
			\end{aligned}
		\end{equation*}				
		Then,
		\begin{equation}\label{eq:localized_expected_stationarity}
			\frac1T
			\sum_{t=0}^{T-1}
			\mathbb E
			\left[
			\|\nabla\Phi(x^t)\|
			\mathbf 1_{\{t<\tau\}}
			\right]
			\leq
			\frac{C}{\delta^{3/4}T^{1/4}},
		\end{equation}
		where \(C>0\) is independent of \(T\) and \(\delta\).
		
		Combining \eqref{RS_equ_stopped_localization_failure} and \eqref{eq:localized_expected_stationarity} gives
		\[
		\begin{aligned}
			\frac1T
			\sum_{t=0}^{T-1}
			\mathbb E\|\nabla\Phi(x^t)\|
			&\leq
			\frac{C}{\delta^{3/4}T^{1/4}}
			+
			G_\Phi\delta.
		\end{aligned}
		\]
		Moreover, choosing \(\delta=T^{-1/7}\), the above bound reduces to
		\[
		\frac1T\sum_{t=0}^{T-1}
		\mathbb E\|\nabla\Phi(x^t)\|
		\leq
		C T^{-1/7}.
		\]
		The proof is complete.

	\end{proof}
	
	Inequality~\eqref{RS_equ_NSGDA-M_expectation_bound} in
	Theorem~\ref{RS_thm_NSGDA-M_expectation} consists of two terms.
	The first term \(C/(\delta^{3/4}T^{1/4})\) represents the convergence behavior
	within the localization region and corresponds to the localized
	expected stationarity bound established by
	Xian et al.~\cite{xian2024delving}. The second term
	\(G_\Phi\delta\) explicitly captures the effect of the localization
	failure event \(\{\tau<T\}\), which is controlled by
	\(\mathbb P(\tau<T)\leq\delta\) together with
	Assumption~\ref{RS_ass_boundedGradientIterates}. The second term
	cannot be omitted when establishing convergence in expectation.

	\section{Numerical Experiments}\label{RS_Section_4}
	To evaluate the performance of NSGDA-M, we conduct experiments on a real-world application of distributionally robust logistic regression~\cite{xian2024delving}, which is formulated as follows
	\begin{equation}\label{euq:experimet_real}
		\min _{x \in \mathbb{R}^{n}} \max _{y \in Y \subset \mathbb{R}^{N}}\mathcal{L}(x, y)\coloneqq \frac{1}{N} \sum_{i=1}^{N} y_{i}\ell(x;a_{i},b_{i})+f(x)-g(y),
	\end{equation}
	with $$\ell(x;a_{i},b_{i})=\log \left(1+\exp \left(-b_{i} (a_{i})^{\top} x\right)\right),$$
	$$f(x)=\lambda_{1} \sum_{i=1}^{n} \frac{\alpha x_{i}^{2}}{1+\alpha x_{i}^{2}},\,\,
	g(y)=\frac{1}{2} \lambda_{2}\left\|N y-\mathbf{1}\right\|^{2}.$$
	Here $\ell(\cdot)$ is the logistic loss function,
	$f(\cdot)$ is a nonconvex regularizer, and $g(\cdot)$ is a distributionally robust regularizer. The set
	$Y \coloneqq\left\{y \in \mathbb{R}_{+}^{N}: \mathbf{1}^{\top} y=1\right\}$ is a simplex, where $\mathbf{1}$ denotes the all-ones vector, and $\mathcal{L}(\cdot)$ is the average loss function over the entire dataset.
	In problem \eqref{euq:experimet_real}, $x$ is the classifier parameter, $N$ is the  number of training samples, $a_{i}\in\mathbb{R}^{n}$ is the feature vector of the $i$-th sample, and $b_{i}\in\{-1,1\}$ is its label.
	
	We conduct experiments on nine benchmark binary classification 
	datasets (\texttt{a9a}, \texttt{covtype}, \texttt{diabetes}, 
	\texttt{german}, \texttt{gisette}, \texttt{ijcnn1}, 
	\texttt{mushrooms}, \texttt{phishing}, \texttt{w8a}) from 
	the LIBSVM repository\footnote{\url{https://www.csie.ntu.edu.tw/~cjlin/libsvmtools/datasets}}.
	Table \ref{RR_tab_datasets} reports the number of samples and features in each dataset.
	Following \cite{xian2024delving}, we set the parameters in problem~\eqref{euq:experimet_real} to $\lambda_1=0.001$, $\lambda_2=\frac{1}{N^2}$ and $\alpha=10$. 
	\begin{table}[htbp]
		\centering
		\small
		\setlength{\tabcolsep}{3.5pt}
		\renewcommand{\arraystretch}{1.15}
		\begin{tabular}{@{}lccccccccc@{}}
			\toprule
			Name & a9a & covtype & diabetes & german & gisette & ijcnn1 & mushrooms & phishing & w8a \\
			\midrule
			Samples & 32561 & 581012 & 768 & 1000 & 6000 & 141691 & 8124 & 11055 & 49749 \\
			Features & 123 & 54 & 8 & 24 & 5000 & 22 & 112 & 68 & 300 \\
			\bottomrule
		\end{tabular}
		\caption{Characteristics of the LIBSVM binary classification datasets}
		\label{RR_tab_datasets}
	\end{table}
	
	We compare NSGDA-M with the normalized stochastic gradient descent ascent (NSGDA) and stochastic gradient descent ascent (SGDA) methods with constant stepsizes in \cite{xian2024delving}. 
	For NSGDA-M, the batch size is $1$, and the momentum parameter $\beta$ is selected from $\{0.1,0.4,0.7,0.9\}$ by grid search. For NSGDA and SGDA, following the setting of \cite{xian2024delving}, the batch size is set to be $50$. 
	For each algorithm, the stepsizes $\eta_x$ and $\eta_y$
	are selected from $\{0.1, 0.01, 0.001, 10^{-4}, 10^{-5}, 10^{-6}\}$ by grid search. 
	We report the convergence behavior of the algorithms in Figure~\ref{fig:converge}, which depicts the norm of gradient of the primal function versus the number of iterations. As observed from Figure~\ref{fig:converge}, the convergence performance 
	of NSGDA-M is comparable to that of NSGDA on most datasets; moreover, NSGDA-M 
	exhibits more stable convergence behavior. In contrast, SGDA demonstrates 
	inferior convergence performance across the majority of datasets.
	
	\begin{figure}[H]
		\centering
		\subfloat[a9a]{%
			\includegraphics[width=0.32\textwidth]{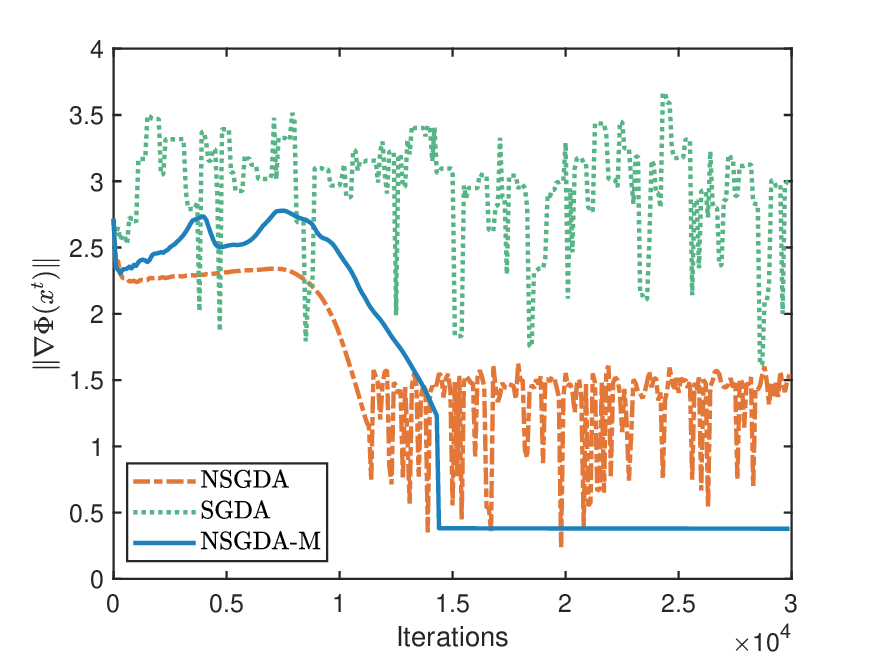}%
		}\hfill
		\subfloat[covtype]{%
			\includegraphics[width=0.32\textwidth]{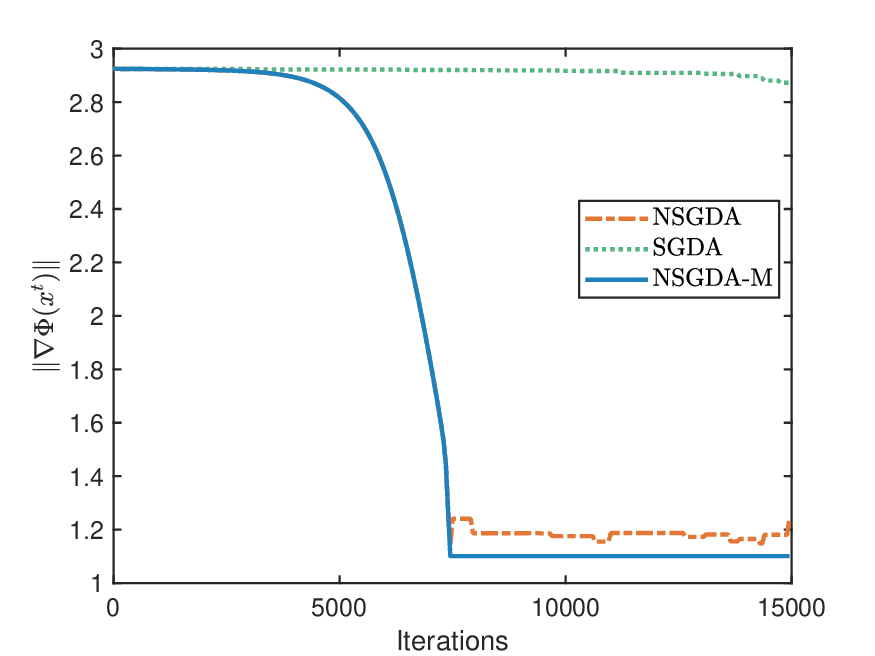}%
		}\hfill
		\subfloat[diabetes]{%
			\includegraphics[width=0.32\textwidth]{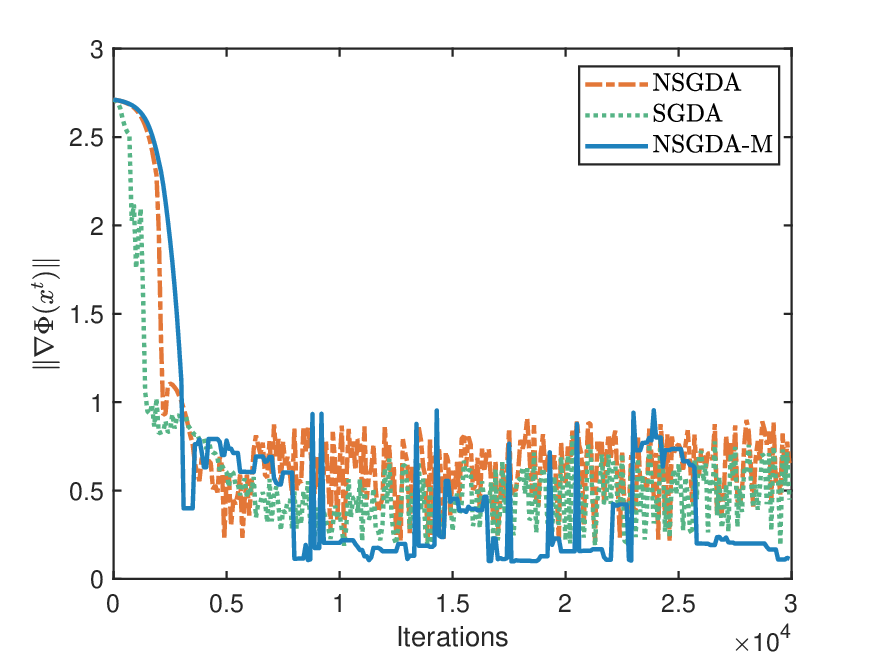}%
		}
		\par
		
		\subfloat[german]{%
			\includegraphics[width=0.32\textwidth]{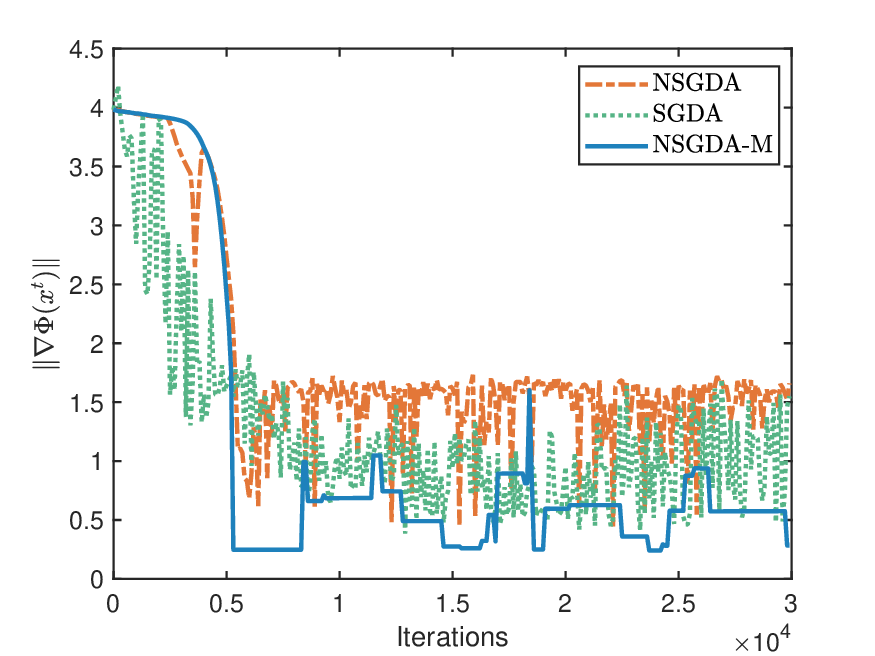}%
		}\hfill
		\subfloat[gisette]{%
			\includegraphics[width=0.32\textwidth]{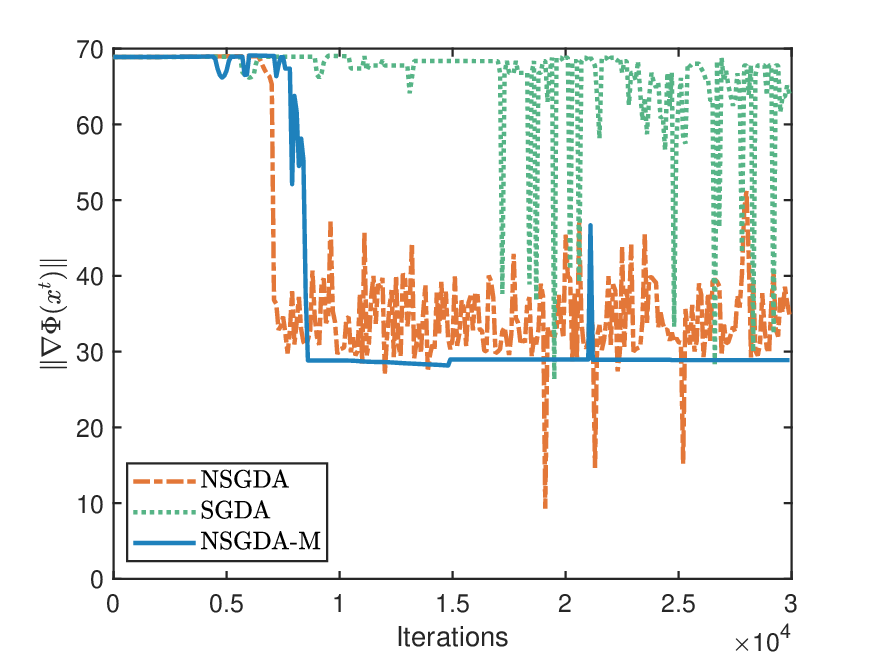}%
		}\hfill
		\subfloat[ijcnn1]{%
			\includegraphics[width=0.32\textwidth]{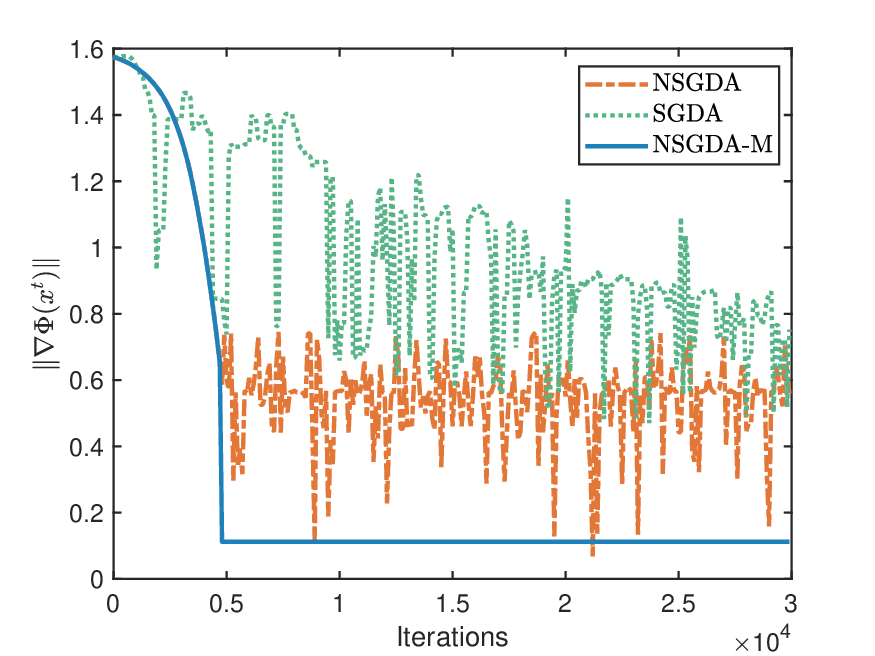}%
		}
		\par
		
		\subfloat[mushrooms]{%
			\includegraphics[width=0.32\textwidth]{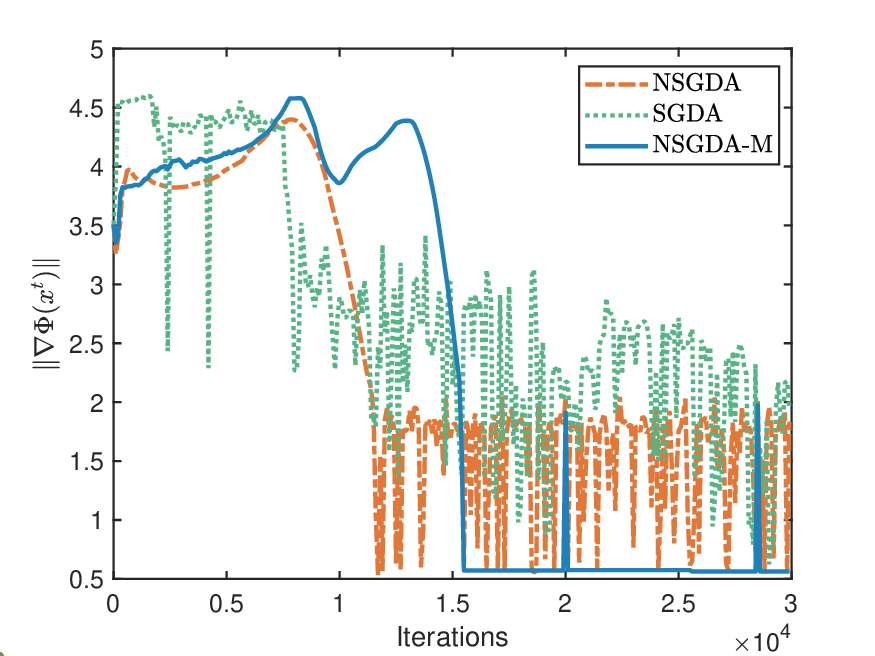}%
		}\hfill
		\subfloat[phishing]{%
			\includegraphics[width=0.32\textwidth]{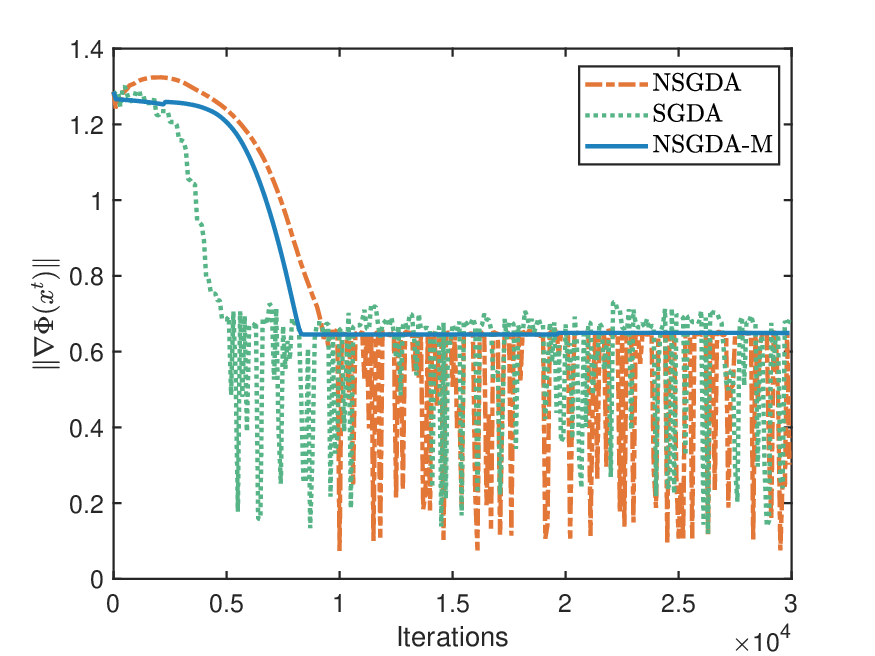}%
		}\hfill
		\subfloat[w8a]{%
			\includegraphics[width=0.32\textwidth]{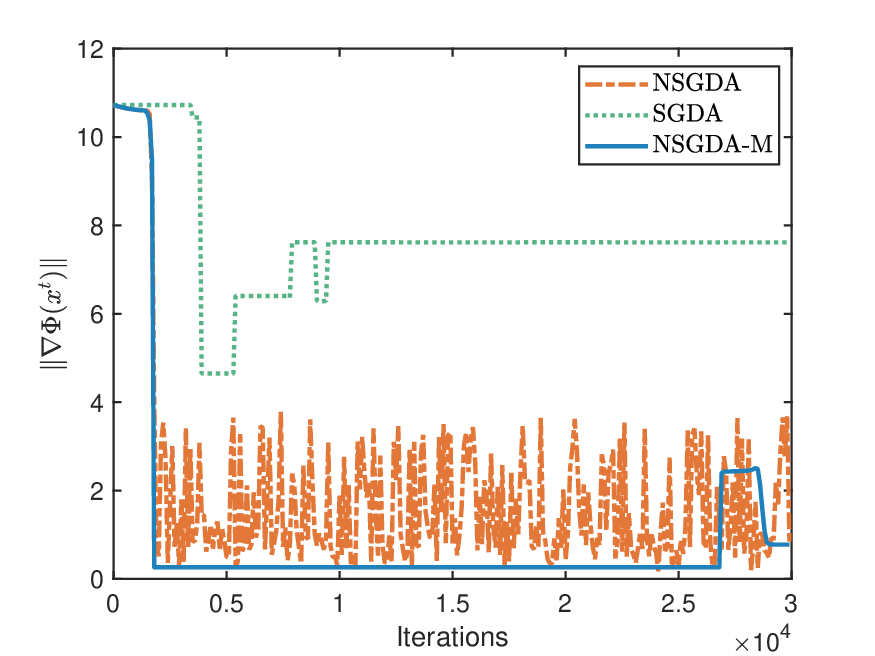}%
		}
		
		\caption{Convergence of NSGDA-M, NSGDA and SGDA}
		\label{fig:converge}
	\end{figure}

	\noindent\textbf{Funding}
	This research is supported by the National Natural Science
	Foundation of China (Grant No. 12471283) and the Fundamental Research
	Funds for the Central Universities (Grant No. DUT24LK001).
	\medskip
	
	\noindent\textbf{Data availability}
	No new datasets were generated in this study. The datasets
	used in the numerical experiments are publicly available from their
	original sources.
	
	\medskip
	
	\noindent\textbf{Conflict of interest}
	The authors report no potential conflict of interest.
	
	\bibliographystyle{plain}
	\bibliography{references}
	
\end{document}